\newtheorem{definition}{Definition}[section]
\newtheorem{theorem}[definition]{Theorem}
\newtheorem{lemma}[definition]{Lemma}
\theoremstyle{remark}
\numberwithin{equation}{section}
\title{Wolff Type Potential Estimates for Stationary Stokes Systems with Dini-$BMO$ Coefficients}
\author[a]{Lingwei Ma}
\author[b]{Zhenqiu Zhang\thanks{Corresponding author.}}
\affil[a]{School of Mathematical Sciences, Nankai University, Tianjin 300071, P.R. China}
\affil[b]{School of Mathematical Sciences and LPMC, Nankai University, Tianjin, 300071, P.R. China}
\date{\today}
\begin{document}
\maketitle
\footnotetext[1]{E-mail: 1120170026@mail.nankai.edu.cn (L. Ma), zqzhang@nankai.edu.cn (Z. Zhang).}
\begin{abstract}
The pointwise gradient estimate for weak solution pairs to the stationary Stokes system with Dini-$BMO$ coefficients is established via the Havin-Maz'ya-Wolff type nonlinear potential of the nonhomogeneous term. In addition, we present a pointwise bound for the weak solutions under no extra regularity assumption on the coefficients.\\

Mathematics Subject classification (2010): 76D07; 35R05; 35B65.

Keywords: Stokes systems; Pointwise estimates; Dini-$BMO$ coefficients; Havin-Maz'ya-Wolff potential. \\

\end{abstract}


\section{Introduction.}\label{section1}

In this paper, the following generalized stationary Stokes problem
\begin{equation}\label{model}
\left\{\begin{array}{r@{\ \ }c@{\ \ }ll}
\operatorname{div}\left({\bf A}(x)D {\bf u}\right)-\nabla\pi & =& \operatorname{div}{\bf F} & \mbox{in}\ \ \Omega\,, \\[0.05cm]
\operatorname{div}{\bf u}&=& 0 & \mbox{in}\ \ \Omega\,, \\[0.05cm]
{\bf u} &=& {\bf 0} & \mbox{on}\ \ \partial\Omega\,,
\end{array}\right.
\end{equation}
is considered in a bounded domain $\Omega\subset\mathbb{R}^{n}$ ($n\geq2$), where ${\bf F}\in L^{2}(\Omega\,,\mathbb{R}^{n\times n})$ is a given exterior force and the coefficient ${\bf A}=\left\{A_{ij}^{kl}\right\}$ with $1\leq i,j,k,l\leq n$ is assumed to be uniformly elliptic and bounded, i.e., there exist constants $0<\lambda\leq 1\leq \Lambda<\infty$ such that
\begin{equation}\label{uniformellipticity}
\lambda\,|\xi|^{2}\,\leq\,\left\langle\,{\bf A}(x)\, \xi\,, \xi\,\right\rangle\,, \qquad\left\langle\,{\bf A}(x)\, \xi\,, \eta\,\right\rangle\,\leq\,\Lambda\,|\xi||\eta|
\end{equation}
for almost every $x\in\Omega$ and every $\xi,\,\eta\in\mathbb{R}^{n\times n}$\,, where $\langle\cdot,\cdot\rangle$ denotes the standard inner product in $\mathbb{R}^{n\times n}$. More precise regularity assumptions on the vector field ${\bf A}$ will be postponed in this section. The unknowns are the velocity of the fluid flow ${\bf u}:\Omega\rightarrow\mathbb{R}^{n}$ and its pressure $\pi:\Omega\rightarrow\mathbb{R}$. Here the second equation $\operatorname{div}{\bf u}= 0$ reveals that the fluid is homogeneous and incompressible.

System \eqref{model} describes the flow of Newtonian fluid, which
serves as a typical model in fluid mechanics (cf. \cite{BMR}). In continuum mechanics, Newtonian fluid means that the viscous stresses at every point are
linearly proportional to the local strain rate. For the existence of a
weak solution pair to Stokes system \eqref{model}, we refer the reader to
\cite{ChLe, La}\,. Our interest here is to establish the pointwise estimates for both weak solution pairs and their gradients to \eqref{model} by using the Havin-Maz'ya-Wolff type nonlinear potential of the nonhomogeneous term.

Potential theory plays an essential role in the regularity theory of partial differential equations.
During the last decade considerable attentions have been paid to the investigation of the pointwise potential type estimates for both the solutions and their gradients to non-Newtonian field model whose nonhomogeneous term is a function or a more general measure, due to these pointwise estimates provide a unified approach to get the norm bounds for solutions in a wide range of functions spaces. As a consequence of these results, some regularity properties for these solutions can be established, such as H\"{o}lder continuity, Calder\'on–Zygmund estimates and so on. The pioneering works on this subject originate from Kilpel\"{a}inen and Mal\'{y} \cite{KiMa, KiMa2}, who proved the pointwise estimates for solutions to the nonlinear equations of $p$-Laplace type via the Wolff type nonlinear potentials. Here the Havin-Maz'ya-Wolff potential was introduced by Maz'ya and Havin \cite{MaHa} and the relevant fundamental contributions were attributed to Hedberg and Wolff \cite{HeWo}. Later Trudinger and Wang \cite{TrWa} provided these estimates in terms of a different method. Besides, some important
results have been achieved by Labutin \cite{Lab} concerning the Wolff type potential estimates related to fully nonlinear Hessian type operators.
With respect to the pointwise bounds for solutions to the $p$-Laplace system, we refer the reader to \cite{CiSc}. The study of this subject has received an impulse by Mingione \cite{Min}, who showed that the pointwise gradient estimates for solutions to the quasilinear equations of Laplace type employing the linear Riesz potentials, exactly as it happens for the Poisson equation via representation formulas.
Subsequently, Duzaar, Kuusi and Mingione \cite{DuMi, DuMi2, DuMi3, KuMi, KuMi2, KuMi3} obtained the similar pointwise bounds also hold for the gradient of solutions to the nonlinear equations and systems via the Wolff type nonlinear potentials.

In contrast, much less effort has been devoted to the pointwise potential estimates for both the solutions and their gradients to Newtonian fluid to our knowledge, so
we are naturally led to consider such estimates for \eqref{model}. For the sake of stating our main results, we need to present some definitions, notations and assumptions. Let us begin with the definition of the weak solution pair to problem \eqref{model}\,.
\begin{definition}\label{Def-weaksolution}
Let ${\bf F}\in L^{2}(\Omega\,,\mathbb{R}^{n\times n})$\,. Then ${\bf u}(x)$ is called a weak solution of the Stokes systems \eqref{model}, if
\begin{equation*}
{\bf u}\in W^{1,2}_{0,\sigma}\left(\Omega,\mathbb{R}^{n}\right):=\left\{{\bf v}\in W^{1,2}_{0}\left(\Omega,\mathbb{R}^{n}\right)|\operatorname{div}{\bf v}=0\right\}
\end{equation*}
and solves \eqref{model} in the distribution sense, i.e.,
\begin{equation*}
 \int_{\Omega}\big\langle{\bf
A}(x){\bf \bf\nabla u}\,,{\bf\bf\nabla\phi}\big\rangle\operatorname{d}\!x
= \int_{\Omega}\big\langle{\bf F}\,,{\bf
\bf\nabla\phi}\big\rangle\operatorname{d}\!x
\end{equation*}
for any divergence free test function ${\bf \phi}\in
W_{0,\sigma}^{1,2}(\Omega,\mathbb{R}^{n})$\,. Meanwhile, if ${\bf u}$ is such a weak solution and $\pi\in L^{2}(\Omega)$ stands for an associated pressure of ${\bf u}$, which satisfies
\begin{equation*}
 \int_{\Omega}\big\langle{\bf
A}(x){\bf \bf\nabla u}\,,{\bf\bf\nabla\varphi}\big\rangle-\pi \operatorname{div}{\bf\varphi}\operatorname{d}\!x =
\int_{\Omega}\big\langle{\bf F}\,,{\bf
\bf\nabla\varphi}\big\rangle\operatorname{d}\!x
\end{equation*}
for any test function ${\bf \varphi}\in
W_{0}^{1,2}(\Omega,\mathbb{R}^{n})$\,, then $\left({\bf u},\pi\right)$ is called a weak solution pair of \eqref{model}.
\end{definition}

Next, we introduce the definition of the truncated Havin-Maz'ya-Wolff potential.

\begin{definition}
Let $s>1$ and $\alpha>0$, the truncated Havin-Maz'ya-Wolff potential ${\bf W}_{\alpha,s}^{R}f$ of $f\in L_{\rm loc}^{1}(\Omega)$ is defined by
\begin{equation*}
  {\bf W}_{\alpha,s}^{R}f(x_{0})=\int_{0}^{R}\left(\varrho^{\alpha s}\fint_{B_{\varrho}(x_{0})}| f(x)|\operatorname{d}\!x\right)^{\frac{1}{s-1}}\frac{\operatorname{d}\!\varrho}{\varrho}
\end{equation*}
for any $x_{0}\in\Omega$ and $R>0$ such that $B_{R}(x_{0})\subset\Omega$, where $B_{\varrho}(x_{0})$ denotes an open ball in $\mathbb{R}^{n}$ with center $x_{0}$ and radius $\varrho>0$.
\end{definition}

Throughout this paper we assume that ${\bf A}$ has a small $BMO$ semi-norm and satisfies the Dini-$BMO$ regularity. More precisely,

\begin{definition}\label{Def-Vanish}
We say that the vector field ${\bf A}$ is $\left(\delta, R\right)$-vanishing for some $\delta$, $R>0$, if
\begin{equation}\label{smallBMO}
[{\bf A}]_{\operatorname{BMO}}^{2}(R):=\sup_{{\substack{ x_{0}\,\in\,\Omega\\0<r\leq R}}} \fint_{B_{r}(x_{0})}
\left|{\bf A}(x)-\overline{{\bf A}}_{B_{r}(x_{0})}\right|^{2}\operatorname{d}\!x\leq\delta^{2}\,,
\end{equation}
where
\begin{equation*}
\overline{{\bf A}}_{B_{r}(x_{0})}:=\fint_{B_{r}(x_{0})}{\bf A}(x)\operatorname{d}\!x=\frac{1}{\left|B_{r}(x_{0})\right|}\int_{B_{r}(x_{0})}{\bf A}(x)\operatorname{d}\!x
\end{equation*}
denotes the matrix whose entries are the integral averages for the corresponding entries of ${\bf A}$ over $B_{r}(x_{0})$\,. Besides, we impose a decay property on $\bf A$, which is called that Dini-$BMO$ regularity if
\begin{equation}\label{dini-bmo}
  d(R):=\int_{0}^{R}\left([{\bf A}]_{\operatorname{BMO}}^{2}(\varrho)\right)^{\frac{1}{2}-\frac{1}{q}}\frac{\operatorname{d}\!\varrho}{\varrho}<\infty
\end{equation}
for any $R>0$, where $q=q(\lambda,\Lambda,n)>2$ is given in \eqref{v-reverseholder}.
\end{definition}

In what follows $C$ denotes a constant whose value may be different from line to line, and only the relevant dependence is specified. Now we are in position to state our main result of this paper. The first result infers that the gradient of weak solution $D{\bf u}$ and its pressure $\pi$ to the stationary Stokes system \eqref{model} with Dini-$BMO$ coefficients can be controlled by an unconventional Havin-Maz'ya-Wolff
type nonlinear potential of the nonhomogeneous term $\bf F$.
\begin{theorem}\label{Th1}\textup{(Nonlinear potential gradient estimate)}
Let $({\bf u},\pi)$ be a weak solution pair of
\eqref{model} with ${\bf F}\in L
^{2}\left(\Omega\,,\mathbb{R}^{n\times n}\right)$ and $\bf A$ satisfying \eqref{uniformellipticity}, \eqref{smallBMO} and \eqref{dini-bmo} for some $\delta=\delta(n,\lambda,\Lambda)>0$ and $R>0$. Then there exist a positive constant $C=C(n,\lambda,\Lambda)$ and a positive radius $R_{0}=R_{0}\left(n,\lambda,\Lambda, d(\cdot)\right)$ such that the pointwise estimate
\begin{equation}\label{gradient-estimate}
  |D{\bf u}(x_{0})|+|\pi (x_{0})|\leq C\fint_{B_{R}(x_{0})}\left|D{\bf u}\right|\operatorname{d}\!x+ C\fint_{B_{R}(x_{0})}\left|\pi\right|\operatorname{d}\!x+C\int_{0}^{2R}\left( \fint_{B_{\varrho}(x_{0})}\left(\frac{|{\bf F}-\overline{{\bf F}}_{B_{\varrho}(x_{0})}|}{\varrho}\right)^{2}\operatorname{d}\!x\right)^{\frac{1}{2}}\operatorname{d}\!\varrho
\end{equation}
holds for almost all $x_{0}\in\Omega$ and every $B_{2R}(x_{0})\subset\Omega$ with $R\leq R_{0}$.
\end{theorem}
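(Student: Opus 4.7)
The plan is to set up a Campanato-type excess-decay iteration for the oscillations of both $D{\bf u}$ and $\pi$, and then to sum the resulting inequality along a dyadic sequence of radii, using the Dini-$BMO$ hypothesis to control the perturbation from variable coefficients. The scheme is standard for elliptic systems, but the saddle-point nature of the Stokes system forces us to iterate the velocity gradient and the pressure together.

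\textbf{Frozen-coefficient comparison.} Fix a Lebesgue point $x_{0}$ and a radius $\varrho$ with $B_{2\varrho}(x_{0})\subset\Omega$. On $B_{\varrho}(x_{0})$, let $({\bf v},\pi_{{\bf v}})$ solve the homogeneous frozen-coefficient Stokes system
\[
\operatorname{div}\bigl(\overline{{\bf A}}_{B_{\varrho}(x_{0})}D{\bf v}\bigr)-\nabla\pi_{{\bf v}}={\bf 0},\qquad \operatorname{div}{\bf v}=0 \ \text{in}\ B_{\varrho}(x_{0}),\qquad {\bf v}={\bf u}\ \text{on}\ \partial B_{\varrho}(x_{0}).
\]
Testing the weak form of the difference equation with ${\bf w}:={\bf u}-{\bf v}$ (which is divergence-free and has zero trace on $\partial B_{\varrho}$), inserting $\pm\overline{{\bf A}}_{B_{\varrho}}D{\bf u}$, and replacing ${\bf F}$ by ${\bf F}-\overline{{\bf F}}_{B_{\varrho}}$ (permissible since $\overline{{\bf F}}_{B_{\varrho}}{:}\,D{\bf w}$ has zero integral over $B_{\varrho}$) gives, after H\"older's inequality and the Meyers-Gehring higher integrability of $D{\bf u}$ up to the exponent $q$ of \eqref{dini-bmo}, the comparison bound
\[
\Bigl(\fint_{B_{\varrho}}|D{\bf w}|^{2}\Bigr)^{\!1/2}+\Bigl(\fint_{B_{\varrho}}|\pi-\pi_{{\bf v}}|^{2}\Bigr)^{\!1/2}\leq C\bigl([{\bf A}]_{\operatorname{BMO}}(\varrho)\bigr)^{1-2/q}\Bigl(\fint_{B_{2\varrho}}|D{\bf u}|^{2}\Bigr)^{\!1/2}+C\Bigl(\fint_{B_{\varrho}}|{\bf F}-\overline{{\bf F}}_{B_{\varrho}}|^{2}\Bigr)^{\!1/2},
\]
where the $L^{2}$-bound for $\pi-\pi_{{\bf v}}$ is recovered by a Bogovski\u{\i}/Ne\v{c}as duality argument applied to the divergence of the difference equation.

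\textbf{Excess decay and iteration.} Classical interior regularity for the constant-coefficient homogeneous Stokes system yields, for every $\tau\in(0,1/2)$,
\[
\Bigl(\fint_{B_{\tau\varrho}}|D{\bf v}-(D{\bf v})_{B_{\tau\varrho}}|^{2}\Bigr)^{\!1/2}+\Bigl(\fint_{B_{\tau\varrho}}|\pi_{{\bf v}}-(\pi_{{\bf v}})_{B_{\tau\varrho}}|^{2}\Bigr)^{\!1/2}\leq C\tau\biggl[\Bigl(\fint_{B_{\varrho}}|D{\bf v}|^{2}\Bigr)^{\!1/2}+\Bigl(\fint_{B_{\varrho}}|\pi_{{\bf v}}-(\pi_{{\bf v}})_{B_{\varrho}}|^{2}\Bigr)^{\!1/2}\biggr].
\]
Coupling this with the comparison step through the triangle inequality, and letting $\Psi(x_{0},\varrho)$ denote the combined $L^{2}$-oscillation of $D{\bf u}$ and $\pi$ over $B_{\varrho}$ augmented by their $L^{1}$-means (needed to dominate the $\fint|D{\bf u}|^{2}$ factor on the right), one arrives at
\[
\Psi(x_{0},\tau\varrho)\leq C\tau\,\Psi(x_{0},\varrho)+C\tau^{-n/2}\bigl([{\bf A}]_{\operatorname{BMO}}(\varrho)\bigr)^{1-2/q}\Psi(x_{0},2\varrho)+C\tau^{-n/2}\Bigl(\fint_{B_{\varrho}}|{\bf F}-\overline{{\bf F}}_{B_{\varrho}}|^{2}\Bigr)^{\!1/2}.
\]
Choosing $\tau$ with $C\tau\leq 1/4$ and then $R_{0}$ so small that $C\tau^{-n/2}([{\bf A}]_{\operatorname{BMO}}(\varrho))^{1-2/q}\leq 1/4$ for every $\varrho\leq 2R_{0}$—admissible because \eqref{dini-bmo} forces $[{\bf A}]_{\operatorname{BMO}}(\varrho)\to 0$ as $\varrho\to 0$—we iterate the decay along $\varrho_{k}=\tau^{k}R$. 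Summation contributes the finite Dini integral $d(R)$ from the BMO perturbation and exactly the unconventional Wolff-type integral $\int_{0}^{2R}(\fint_{B_{\varrho}}|({\bf F}-\overline{{\bf F}}_{B_\varrho})/\varrho|^{2})^{1/2}\,\mathrm{d}\varrho$ from the force term of \eqref{gradient-estimate}. The resulting $\ell^{1}$-summability of $\Psi(x_{0},\varrho_{k})$ makes $(D{\bf u})_{B_{\varrho_{k}}}$ and $(\pi)_{B_{\varrho_{k}}}$ Cauchy, so every such $x_{0}$ is a Lebesgue point of $D{\bf u}$ and $\pi$, and telescoping back to scale $R$ produces \eqref{gradient-estimate}, the $L^{1}$-means of $D{\bf u}$ and $\pi$ over $B_{R}(x_{0})$ arising from the base level of the iteration.

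\textbf{Main obstacle.} The decisive difficulty lies in the comparison estimate: because the Stokes system is a saddle-point problem, an $L^{2}$-decay for $\pi-\pi_{{\bf v}}$ must be produced in tandem with the energy bound for $D{\bf w}$, and this requires a Bogovski\u{\i}-type right inverse of the divergence together with a matching reverse-H\"older self-improvement for the pressure. A subtler point is the appearance of $\Psi(x_{0},2\varrho)$—rather than $\Psi(x_{0},\varrho)$—on the right-hand side of the excess-decay inequality; this scale shift is absorbed via an $\ell^{1}$ summability argument and is precisely why the threshold $R_{0}$ is allowed to depend on the full profile $d(\cdot)$ and not merely on a single value of $[{\bf A}]_{\operatorname{BMO}}$.
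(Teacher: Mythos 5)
Your plan follows the same Campanato--Wolff iteration blueprint as the paper, but the comparison step is carried out differently, and this is where a genuine gap appears.

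You compare $({\bf u},\pi)$ directly to the frozen-coefficient solution $({\bf v},\pi_{\bf v})$ in a single step, and to absorb the perturbation term $\int({\bf A}-\overline{{\bf A}}_{B_\varrho})D{\bf u}:D{\bf w}$ you invoke ``the Meyers--Gehring higher integrability of $D{\bf u}$ up to the exponent $q$ of \eqref{dini-bmo}.'' This is not available. The datum ${\bf F}$ is only assumed to lie in $L^{2}$, so the inhomogeneous system \eqref{model} yields $D{\bf u}\in L^{2}_{\mathrm{loc}}$ and nothing more; a Gehring-type self-improvement for $D{\bf u}$ would require ${\bf F}\in L^{2+\varepsilon}$. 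In particular there is no fixed $q=q(n,\lambda,\Lambda)>2$ with $D{\bf u}\in L^{q}_{\mathrm{loc}}$, and the H\"older split $\|({\bf A}-\overline{{\bf A}})D{\bf u}\|_{L^{2}}\leq\|{\bf A}-\overline{{\bf A}}\|_{L^{2q/(q-2)}}\|D{\bf u}\|_{L^{q}}$, which your bound relies on, cannot be made. The exponent $q$ appearing in the Dini-$BMO$ integral \eqref{dini-bmo} is moreover the reverse-H\"older exponent for solutions of the \emph{homogeneous} variable-coefficient system (see \eqref{v-reverseholder}), not a property of $D{\bf u}$.

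The paper circumvents exactly this obstruction with a two-step comparison: first to $({\bf v},\pi_{\bf v})$ solving the \emph{homogeneous variable-coefficient} system \eqref{ComparisonSystem} (Lemma \ref{approximation1}, where only ${\bf F}-\overline{{\bf F}}$ enters and no higher integrability is needed), and then to $({\bf w},\pi_{\bf w})$ solving the \emph{frozen-coefficient} homogeneous system \eqref{ComparisonSystem-2} (Lemma \ref{approximation2}). It is the intermediate solution ${\bf v}$, not ${\bf u}$, that enjoys the Meyers-type reverse H\"older inequality \eqref{v-reverseholder}, and the coefficient-difference term is paired with $D{\bf v}$ rather than $D{\bf u}$. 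To make your argument rigorous you would need to introduce this intermediate step or otherwise avoid putting higher $L^{q}$-integrability on $D{\bf u}$. The remaining pieces of your sketch (Bogovski\u{\i} duality for the pressure, constant-coefficient excess decay, dyadic Dini summation, and the control of the averages $\bigl(D{\bf u}\bigr)_{B_{\sigma^{i}R}}$ and $(\pi)_{B_{\sigma^{i}R}}$ by induction along the iteration) are in line with the paper's proof and would go through once the comparison lemma is repaired.
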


In analogy with the proof of Theorem \ref{Th1}\,, we can establish a pointwise bound for the weak solution $\bf u$ in terms of the Havin-Maz'ya-Wolff potential of $\bf F$, while no extra regularity assumption other than \eqref{uniformellipticity} is to be assumed on the coefficients.
\begin{theorem}\label{Th2}\textup{(Zero order estimate)}
Let $({\bf u},\pi)$ be a weak solution pair of
\eqref{model} with ${\bf F}\in L
^{2}\left(\Omega\,,\mathbb{R}^{n\times n}\right)$ and $\bf A$ satisfying \eqref{uniformellipticity}. Then there exists a positive constant $C=C(n,\lambda,\Lambda)$ such that the pointwise estimate
\begin{equation}\label{zero-estimate}
  |{\bf u}(x_{0})|\leq C\fint_{B_{R}(x_{0})}\left|{\bf u}\right|\operatorname{d}\!x+C\, {\bf W}_{\frac{2}{3},3}^{2R}\left(|{\bf F}|^{2}\right)(x_{0})
\end{equation}
holds for almost all $x_{0}\in\Omega$ and every $B_{2R}(x_{0})\subset\Omega$.
\end{theorem}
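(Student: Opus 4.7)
The plan is to mimic the iterative comparison scheme that proves Theorem \ref{Th1}, transposing it from the level of $D\mathbf{u}$ (with coefficients frozen at a point) to the level of $\mathbf{u}$ (with the original rough coefficients), so that no regularity on $\mathbf{A}$ beyond \eqref{uniformellipticity} is ever invoked. Fix a Lebesgue point $x_{0}\in\Omega$ of $\mathbf{u}$, choose a dilation factor $\sigma\in(0,1/4)$ to be specified later, and set $r_{j}:=\sigma^{j}R$ and $B_{j}:=B_{r_{j}}(x_{0})$ for $j=0,1,2,\ldots$. On each $B_{j}$ let $(\mathbf{v}_{j},\pi_{j}^{\ast})$ denote the weak solution pair of the homogeneous Stokes system $\operatorname{div}(\mathbf{A}D\mathbf{v}_{j})-\nabla\pi_{j}^{\ast}=0$, $\operatorname{div}\mathbf{v}_{j}=0$ in $B_{j}$, with Dirichlet datum $\mathbf{v}_{j}=\mathbf{u}$ on $\partial B_{j}$.

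Two ingredients drive the iteration. The first is the standard comparison estimate: testing the difference of the equations for $\mathbf{u}$ and $\mathbf{v}_{j}$ against the divergence-free competitor $\mathbf{u}-\mathbf{v}_{j}\in W^{1,2}_{0,\sigma}(B_{j})$ (so that the two pressures drop out), ellipticity \eqref{uniformellipticity} together with Sobolev-Poincar\'e yields
$$
\fint_{B_{j}}|\mathbf{u}-\mathbf{v}_{j}|\,dx\leq C\,r_{j}\Bigl(\fint_{B_{j}}|\mathbf{F}|^{2}\,dx\Bigr)^{1/2}.
$$
The second is a quantitative decay for the homogeneous comparison $\mathbf{v}_{j}$: combining the Gehring-type reverse H\"older inequality \eqref{v-reverseholder} (with exponent $q>2$) for $D\mathbf{v}_{j}$ with a Sobolev embedding on scaled balls, one extracts
$$
\fint_{B_{\sigma r_{j}}}|\mathbf{v}_{j}|\,dx\leq C_{\ast}\,\sigma^{\gamma}\fint_{B_{r_{j}}}|\mathbf{v}_{j}|\,dx
$$
for some $\gamma=\gamma(n,\lambda,\Lambda)>0$ and $C_{\ast}=C_{\ast}(n,\lambda,\Lambda)$. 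The specific exponents $\alpha=2/3$ and $s=3$ defining $\mathbf{W}_{2/3,3}$ are dictated by the scaling at which this decay just balances the $r_{j}(\fint_{B_{j}}|\mathbf{F}|^{2})^{1/2}$ growth of the comparison term when iterated.

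Setting $A_{j}:=\fint_{B_{j}}|\mathbf{u}|\,dx$ and combining the two ingredients through the triangle inequality produces the recurrence
$$
A_{j+1}\leq\kappa\,A_{j}+C\,r_{j}\Bigl(\fint_{B_{j}}|\mathbf{F}|^{2}\,dx\Bigr)^{1/2},
$$
with $\kappa=\kappa(\sigma)<1$ once $\sigma$ is chosen sufficiently small. Iterating this inequality, passing to the limit $j\to\infty$ and invoking the Lebesgue differentiation theorem $A_{j}\to|\mathbf{u}(x_{0})|$, together with the elementary discrete-to-continuous comparison $\sum_{j\geq 0}r_{j}(\fint_{B_{r_{j}}}|\mathbf{F}|^{2})^{1/2}\leq C\int_{0}^{2R}(\fint_{B_{\varrho}(x_{0})}|\mathbf{F}|^{2})^{1/2}\,d\varrho=C\,\mathbf{W}_{2/3,3}^{2R}(|\mathbf{F}|^{2})(x_{0})$, yields \eqref{zero-estimate}.

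The main obstacle is the decay step for $\mathbf{v}_{j}$. In the absence of any regularity on $\mathbf{A}$, one cannot follow Theorem \ref{Th1} and appeal to a pointwise gradient bound for a smoother frozen-coefficient comparison; the contraction factor $\kappa(\sigma)<1$ must instead be wrung out of the higher-integrability gain \eqref{v-reverseholder} and a Sobolev-type embedding alone. Quantifying this decay precisely, and verifying that $\kappa(\sigma)$ is strictly less than one after a single application of the self-improvement scheme, is the most delicate part of the argument.
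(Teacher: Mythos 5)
Your overall blueprint (compare with the homogeneous problem on shrinking balls $B_{\sigma^{j}R}$, iterate, convert the dyadic sum of $r_{j}\bigl(\fint_{B_{j}}|{\bf F}|^{2}\bigr)^{1/2}$ into $\int_{0}^{2R}\bigl(\fint_{B_{\varrho}}|{\bf F}|^{2}\bigr)^{1/2}\operatorname{d}\!\varrho={\bf W}_{2/3,3}^{2R}(|{\bf F}|^{2})$, and finish with Lebesgue differentiation) is the same as the paper's, and your comparison estimate $\fint_{B_{j}}|{\bf u}-{\bf v}_{j}|\le C r_{j}\bigl(\fint_{B_{j}}|{\bf F}|^{2}\bigr)^{1/2}$ is fine (it is \eqref{u-v} plus H\"older). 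The gap is in your second ingredient. The decay inequality $\fint_{B_{\sigma r_{j}}}|{\bf v}_{j}|\le C_{\ast}\sigma^{\gamma}\fint_{B_{r_{j}}}|{\bf v}_{j}|$ is false as stated: any constant pair $({\bf v},\pi_{\bf v})=({\bf c},\mathrm{const})$ solves the homogeneous system, and for it the inequality forces $|{\bf c}|\le C_{\ast}\sigma^{\gamma}|{\bf c}|$, impossible for small $\sigma$. No amount of Gehring higher integrability for $D{\bf v}$ plus Sobolev can repair this, because the claim is scale-invariantly wrong for constants; the decay can only hold for the \emph{oscillation} ${\bf v}-\overline{\bf v}$. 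The same structural problem shows up downstream: your recurrence $A_{j+1}\le\kappa A_{j}+Cr_{j}\bigl(\fint_{B_{j}}|{\bf F}|^{2}\bigr)^{1/2}$ with $A_{j}=\fint_{B_{j}}|{\bf u}|$ and $\kappa<1$, iterated and combined with $A_{j}\to|{\bf u}(x_{0})|$, would give $|{\bf u}(x_{0})|\le C\,{\bf W}_{2/3,3}^{2R}(|{\bf F}|^{2})(x_{0})$ with \emph{no} term $\fint_{B_{R}}|{\bf u}|$ at all — a local statement that is plainly false (take ${\bf F}={\bf 0}$ and a nonzero constant local solution). So the contraction cannot be "wrung out" of \eqref{v-reverseholder}; the $\fint_{B_{R}}|{\bf u}|$ term in \eqref{zero-estimate} is not decorative, it is forced by translation invariance.

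The paper's proof avoids this by working at the level of excess decay throughout: from the Caccioppoli inequality \eqref{th2-inequ1} for ${\bf v}$ and the hole-filling type decay of \cite[Theorem 7.7]{Giu} one gets $\fint_{B_{\sigma R}}|{\bf v}-\overline{\bf v}_{B_{\sigma R}}|^{2}\le C\sigma^{2\gamma}\fint_{B_{R}}|{\bf v}-\overline{\bf v}_{B_{R}}|^{2}$ (this is where the smallness in $\sigma$ genuinely comes from, not from Gehring), then Caccioppoli + Sobolev + Lemma \ref{reverse-holder} downgrade the right-hand side to the $L^{1}$ oscillation, and combining with \eqref{u-v} yields the Campanato-type estimate
\begin{equation*}
\fint_{B_{\sigma R}(x_{0})}\bigl|{\bf u}-\overline{\bf u}_{B_{\sigma R}(x_{0})}\bigr|\operatorname{d}\!x
\le C\sigma^{\gamma}\fint_{B_{2R}(x_{0})}\bigl|{\bf u}-\overline{\bf u}_{B_{2R}(x_{0})}\bigr|\operatorname{d}\!x
+C_{\sigma}R\Bigl(\fint_{B_{2R}(x_{0})}|{\bf F}|^{2}\operatorname{d}\!x\Bigr)^{\frac12}.
\end{equation*}
The iteration is then run on the oscillations, controlling the telescoping differences of averages, i.e. $|{\bf u}(x_{0})-\overline{\bf u}_{B_{R}(x_{0})}|$, and the term $C\fint_{B_{R}}|{\bf u}|$ appears only at the last step when $\overline{\bf u}_{B_{R}(x_{0})}$ is added back. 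If you replace your ingredient 2 by this oscillation decay (and correspondingly iterate the excess rather than $\fint|{\bf u}|$), your argument becomes the paper's proof.
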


The remainder of this paper is organized as follows. Section \ref{section2} contains some preliminary results, the comparison systems and
the relevant comparison estimates. In the last section, we complete the proof of Theorem \ref{Th1} and \ref{Th2}\,.

\section{Preliminaries and comparison estimates.}\label{section2}

In this section, we first collect some auxiliary results which will be useful in the sequel. Furthermore, we introduce the comparison systems and investigate the relevant comparison estimates.

\subsection{Preliminary results}
 Let us begin with the following two technical lemmas.
\begin{lemma}\label{existence-Johndomain}{\rm (cf. \cite{ADM})}
Let $\Omega\in\mathbb{R}^{n}$, $n\geq2$ be a bounded John domain. Given $g\in L^{p}(\Omega)$, $1<p<+\infty$ such that
$\int_{\Omega}g\operatorname{d}\!x=0$, then there exists ${\bf \psi}\in W_{0}^{1,p}\left(\Omega,\mathbb{R}^{n}\right)$ satisfying
\begin{eqnarray*}
  \operatorname{div}{\bf \psi}&=& g \quad\mbox{in}\ \ \Omega\,,\\
  \|{\bf\nabla\psi}\|_{L^{p}(\Omega)} &\leq& C \|g\|_{L^{p}(\Omega)}\,,
\end{eqnarray*}
where the positive constant $C=C\left(\operatorname{diam}(\Omega),n,p\right)$. Particularly, if $\Omega=B_{R}(x_{0})$, then $C$ depends only on $n$ and $p$.
\end{lemma}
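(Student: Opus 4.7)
The plan is to combine Bogovskii's explicit solution operator for the divergence equation on star-shaped domains with a Whitney-type chain decomposition adapted to the John condition. First I would handle the case of a single ball, which gives the sharpened dependence claimed in the ``particularly'' clause, and then bootstrap to general John domains.

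For $\Omega = B_{R}(x_{0})$, translation invariance reduces the problem to $x_{0}=0$, and the dilation $\widetilde{g}(y) := R\,g(Ry)$, $\widetilde{\psi}(y) := \psi(Ry)$ transfers everything to $B_{1}$, since $\operatorname{div}\widetilde{\psi}(y) = R\,\operatorname{div}\psi(Ry)$ and both $\|\widetilde{g}\|_{L^{p}(B_{1})}$ and $\|\nabla\widetilde{\psi}\|_{L^{p}(B_{1})}$ scale like $R^{1-n/p}$ times the corresponding $L^{p}$ norms on $B_{R}$. On $B_{1}$ I would apply Bogovskii's integral formula, built from a smooth compactly-supported weight of unit mass, to write down $\psi$ explicitly. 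A direct computation relying on the mean-zero hypothesis gives $\operatorname{div}\psi = g$, and after one integration by parts $\nabla\psi$ is representable as a Calder\'on--Zygmund singular integral acting on $g$ plus a bounded remainder. Classical $L^{p}$ boundedness of such operators for $1<p<\infty$ then yields the estimate with constant depending only on $n$ and $p$.

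For a general bounded John domain $\Omega$, I would invoke a Whitney-type decomposition into a countable collection of balls $\{B_{i}\} \subset \Omega$ of bounded overlap, together with the chain structure afforded by the John condition: every two balls can be joined by a chain of controlled length whose consecutive members have comparable radii and substantial intersection. Writing $g = \sum_{i} \eta_{i} g$ against a subordinate partition of unity, I would then transfer mean-values along chains to a distinguished root ball in order to replace $\eta_{i} g$ by functions $g_{i}$ supported in $B_{i}$ with $\int_{B_{i}} g_{i} = 0$ while maintaining $\sum_{i} g_{i} = g$. Apply Step~1 on each $B_{i}$ to obtain $\psi_{i} \in W^{1,p}_{0}(B_{i},\mathbb{R}^{n})$ with $\operatorname{div}\psi_{i} = g_{i}$, extend by zero, and set $\psi = \sum_{i} \psi_{i}$.

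The main obstacle is twofold. First, the Calder\'on--Zygmund analysis of Bogovskii's operator on $B_{1}$ requires identifying its principal part after differentiation as a genuine singular kernel with sufficient cancellation, so that $L^{p}$ theory applies uniformly for $p \in (1,\infty)$. Second, the patching step demands a careful $L^{p}$ accounting of the mean-value transfers along chains; this is precisely where the John geometry (and hence $\operatorname{diam}(\Omega)$) enters and produces the asserted dependence $C = C(\operatorname{diam}(\Omega),n,p)$, which collapses to $C(n,p)$ in the scale-invariant ball case of Step~1.
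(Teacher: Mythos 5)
This lemma is cited to Acosta, Dur\'an and Muschietti \cite{ADM}; the paper itself contains no proof, so there is nothing in-paper to compare against. Your outline --- Bogovskii's operator on $B_1$ controlled by Calder\'on--Zygmund theory for the scale-invariant ball estimate, followed by a Whitney/chain decomposition with transfer of averages for a general John domain --- is a standard and correct route to the result and is essentially the strategy used in the literature on the divergence equation over John domains, \cite{ADM} included. Two points you flag as ``obstacles'' are indeed where the real work lies and would have to be carried out to have a proof. In the ball step, differentiating Bogovskii's formula yields a Calder\'on--Zygmund kernel plus a term controlled pointwise by $\fint_{B_1}|g|$; one must verify the size, smoothness and cancellation hypotheses on the singular kernel rather than describing the extra piece as a ``bounded remainder,'' and this is precisely where the restriction $1<p<\infty$ enters. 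More substantially, in the patching step the estimate $\sum_i\|g_i\|_{L^p}^p\lesssim\|g\|_{L^p}^p$ is not an accounting exercise but a weighted (or Hardy-type) inequality along the chain tree that exploits the John geometry, and the constant it produces depends on the John constant of $\Omega$, not merely on $\operatorname{diam}(\Omega)$. The lemma as recorded suppresses that dependence; this is harmless for the present paper, which only ever invokes the lemma on balls, where the ``particularly'' clause gives $C=C(n,p)$, but a self-contained proof along your lines should state it.
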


\begin{lemma}\label{uniqueness-Johndomain}{\rm (cf. \cite{ADM})}
Let $\Omega\in\mathbb{R}^{n}$, $n\geq2$ be a bounded John domain.
Then any bounded linear functional $\mathcal{F}$ on
$W_{0}^{1,2}\left(\Omega,\mathbb{R}^{n}\right)$ identically
vanishing on $W_{0,\sigma}^{1,2}\left(\Omega,\mathbb{R}^{n}\right)$
is of the form $\mathcal{F}({\bf
\psi})=\int_{\Omega}\pi\operatorname{div}{\bf
\psi}\operatorname{d}\!x$ for some uniquely determined $\pi\in
L^{2}(\Omega)/\mathbb{R}$.
\end{lemma}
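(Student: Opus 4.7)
The plan is to reduce the statement to a Riesz representation argument using the surjectivity of the divergence operator provided by Lemma \ref{existence-Johndomain}. Define the closed subspace $L^{2}_{0}(\Omega):=\{g\in L^{2}(\Omega):\int_{\Omega}g\,\mathrm{d}x=0\}$ of $L^{2}(\Omega)$. By Lemma \ref{existence-Johndomain} the divergence map $\operatorname{div}:W_{0}^{1,2}(\Omega,\mathbb{R}^{n})\to L^{2}_{0}(\Omega)$ is surjective (with right-inverse $g\mapsto\psi_{g}$ satisfying $\|\nabla\psi_{g}\|_{L^{2}}\leq C\|g\|_{L^{2}}$), and its kernel is precisely $W^{1,2}_{0,\sigma}(\Omega,\mathbb{R}^{n})$. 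Note also that for any $\psi\in W_{0}^{1,2}(\Omega,\mathbb{R}^{n})$ the trace $\psi\cdot\nu$ vanishes on $\partial\Omega$, so $\int_{\Omega}\operatorname{div}\psi\,\mathrm{d}x=0$, i.e.\ $\operatorname{div}\psi\in L^{2}_{0}(\Omega)$ automatically.

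First I would define a linear map $\widetilde{\mathcal{F}}:L^{2}_{0}(\Omega)\to\mathbb{R}$ by $\widetilde{\mathcal{F}}(g):=\mathcal{F}(\psi_{g})$, where $\psi_{g}$ is any preimage of $g$ under the divergence. The key verification is that $\widetilde{\mathcal{F}}$ is well defined: if $\psi_{g}$ and $\psi_{g}'$ are two such preimages, then $\psi_{g}-\psi_{g}'\in W^{1,2}_{0,\sigma}(\Omega,\mathbb{R}^{n})$, so by hypothesis $\mathcal{F}(\psi_{g})=\mathcal{F}(\psi_{g}')$. Linearity is immediate, and continuity follows from the a priori bound
\begin{equation*}
|\widetilde{\mathcal{F}}(g)|=|\mathcal{F}(\psi_{g})|\leq \|\mathcal{F}\|\,\|\nabla\psi_{g}\|_{L^{2}(\Omega)}\leq C\|\mathcal{F}\|\,\|g\|_{L^{2}(\Omega)}.
\end{equation*}

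Next, by the Riesz representation theorem applied on the Hilbert space $L^{2}_{0}(\Omega)$, there exists a unique $\pi\in L^{2}_{0}(\Omega)$ such that $\widetilde{\mathcal{F}}(g)=\int_{\Omega}\pi\, g\,\mathrm{d}x$ for all $g\in L^{2}_{0}(\Omega)$. Choosing $g=\operatorname{div}\psi$ for an arbitrary $\psi\in W_{0}^{1,2}(\Omega,\mathbb{R}^{n})$ (which lies in $L^{2}_{0}(\Omega)$ by the remark above), and using that $\psi$ itself is an admissible preimage of $\operatorname{div}\psi$, I obtain
\begin{equation*}
\mathcal{F}(\psi)=\widetilde{\mathcal{F}}(\operatorname{div}\psi)=\int_{\Omega}\pi\operatorname{div}\psi\,\mathrm{d}x,
\end{equation*}
which is the desired representation.

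Finally, for uniqueness in $L^{2}(\Omega)/\mathbb{R}$, suppose $\pi_{1},\pi_{2}\in L^{2}(\Omega)$ both represent $\mathcal{F}$. Then $\int_{\Omega}(\pi_{1}-\pi_{2})\operatorname{div}\psi\,\mathrm{d}x=0$ for every $\psi\in W_{0}^{1,2}(\Omega,\mathbb{R}^{n})$; since $\operatorname{div}$ is surjective onto $L^{2}_{0}(\Omega)$, this forces $\pi_{1}-\pi_{2}$ to annihilate all mean-zero $L^{2}$ functions, hence $\pi_{1}-\pi_{2}$ is (almost everywhere) constant, giving a unique class in $L^{2}(\Omega)/\mathbb{R}$. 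The only nontrivial step is the well-definedness of $\widetilde{\mathcal{F}}$, which is precisely where the hypothesis that $\mathcal{F}$ vanishes on divergence-free test fields and Lemma \ref{existence-Johndomain} are both essential; all other steps are routine Hilbert-space arguments.
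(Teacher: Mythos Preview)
The paper does not supply its own proof of this lemma; it is quoted as a known result from \cite{ADM}. Your argument is correct and is the standard one: factor $\mathcal{F}$ through the quotient $W_{0}^{1,2}/W_{0,\sigma}^{1,2}$, identify that quotient with $L^{2}_{0}(\Omega)$ via the divergence using Lemma \ref{existence-Johndomain}, and apply Riesz representation.

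One small remark: invoking ``the trace $\psi\cdot\nu$ vanishes on $\partial\Omega$'' is a bit delicate on a general John domain, where the boundary need not support a classical trace. It is cleaner (and sufficient) to argue by density: for $\psi\in C_{c}^{\infty}(\Omega,\mathbb{R}^{n})$ one has $\int_{\Omega}\operatorname{div}\psi\,\mathrm{d}x=0$ trivially, and this passes to $W_{0}^{1,2}$ by approximation. With that adjustment the proof is complete.
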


The next result is regarding to a self-improving property of reverse H\"{o}lder inequality.

\begin{lemma}\label{reverse-holder} {\rm (cf. \cite{Giu})}
Let ${\bf g}\in L_{\rm loc}^{1}(\Omega,\mathbb{R}^{n\times n})$. Assume that there exist constants $0<\tau<1$, $\gamma>1$ and $C_{0}>0$ such that
\begin{equation*}
  \left(\fint_{B_{\tau r}}|{\bf g}|^{\gamma}\operatorname{d}\!x\right)^{\frac{1}{\gamma}}\leq C_{0}\fint_{B_{r}}|{\bf g}|\operatorname{d}\!x
\end{equation*}
for every $B_{r}\subset\Omega$. Then
\begin{equation*}
  \left(\fint_{B_{\tau r}}|{\bf g}|^{\gamma}\operatorname{d}\!x\right)^{\frac{1}{\gamma}}\leq C\left(\fint_{B_{r}}|{\bf g}|^{t}\operatorname{d}\!x\right)^{\frac{1}{t}}
\end{equation*}
for every $0<t\leq1$, where the positive constant $C=C(C_{0},n,\tau,t)$.
\end{lemma}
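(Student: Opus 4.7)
My plan is a standard two-step argument: a H\"older interpolation to relate $\fint|{\bf g}|$ to a geometric mean of $\fint|{\bf g}|^{t}$ and $\fint|{\bf g}|^{\gamma}$, followed by a Young-type absorption and an iteration on nested balls.

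For the first step, I apply H\"older's inequality with conjugate exponents $p=(\gamma-t)/(\gamma-1)$ and $p'=(\gamma-t)/(1-t)$, chosen so that $t/p+\gamma/p'=1$. This yields, on every ball $B\subset\Omega$,
\begin{equation*}
\fint_{B}|{\bf g}|\operatorname{d}\!x\le\left(\fint_{B}|{\bf g}|^{t}\operatorname{d}\!x\right)^{\frac{\gamma-1}{\gamma-t}}\left(\fint_{B}|{\bf g}|^{\gamma}\operatorname{d}\!x\right)^{\frac{1-t}{\gamma-t}}.
\end{equation*}
Plugging this into the hypothesized reverse H\"older inequality and setting $\theta:=\gamma(1-t)/(\gamma-t)\in(0,1)$ (so $1-\theta=t(\gamma-1)/(\gamma-t)$), one arrives at
\begin{equation*}
\Phi(B_{\tau r})\le C_{0}\,\Psi(B_{r})^{1-\theta}\,\Phi(B_{r})^{\theta},
\end{equation*}
where $\Phi(B):=(\fint_{B}|{\bf g}|^{\gamma})^{1/\gamma}$ and $\Psi(B):=(\fint_{B}|{\bf g}|^{t})^{1/t}$. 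A Young inequality with exponents $(1/\theta,1/(1-\theta))$ then converts this into
\begin{equation*}
\Phi(B_{\tau r})\le\varepsilon\,\Phi(B_{r})+C(C_{0},\varepsilon)\,\Psi(B_{r})\qquad\textrm{for every }\varepsilon\in(0,1).
\end{equation*}

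For the second step, I would close the estimate by absorbing the $\Phi$-term on the right. Since the left-hand side involves the smaller ball $B_{\tau r}$ while the right-hand side involves $B_{r}$, the absorption requires an iteration on a family of concentric nested balls. I would apply the estimate along a chain with successive balls in $\tau$-ratio, control the geometric factors arising from volume comparisons via a finite covering of the annular regions, and close the iteration by Giaquinta's standard hole-filling lemma after choosing $\varepsilon$ small enough. The main obstacle I anticipate is arranging the nested covering so that the final constant depends only on $C_{0}$, $n$, $\tau$, and $t$, rather than on the particular radius $r$; the key input there is that the hypothesis holds uniformly on every ball $B_{r}\subset\Omega$, which permits one to rescale and transfer the single-scale inequality to arbitrary pairs of concentric balls with comparable radii.
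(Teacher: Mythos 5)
The paper does not actually prove this lemma; it only cites Giusti's book, so the relevant comparison is with the standard textbook argument, which is essentially what you are reconstructing. Your first step is correct and complete: the exponents $p=(\gamma-t)/(\gamma-1)$ and $p'=(\gamma-t)/(1-t)$ are conjugate, the splitting $|{\bf g}|=|{\bf g}|^{t/p}\,|{\bf g}|^{\gamma/p'}$ gives the stated interpolation, and Young's inequality yields $\Phi(B_{\tau r})\le\varepsilon\,\Phi(B_{r})+C(C_{0},\varepsilon,t,\gamma)\,\Psi(B_{r})$ (for $t<1$; the case $t=1$ is the hypothesis itself). The substance is all in your second step, and there your sketch names the right tools but blurs the mechanism. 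A chain of concentric balls in $\tau$-ratio cannot by itself absorb the $\varepsilon$-term: that term always sits on a strictly larger ball than the left-hand side, and iterating inward only controls small balls by large ones. What closes the argument is upgrading the fixed-ratio hypothesis to arbitrary concentric pairs: for $\tau r\le s<s'\le r$, cover $B_{s}$ by $N\le c(n)\bigl(\tfrac{r}{\tau(s'-s)}\bigr)^{n}$ balls $B_{\tau(s'-s)}(y_{i})$ with $y_{i}\in B_{s}$, so that $B_{s'-s}(y_{i})\subset B_{s'}\subset\Omega$ and the hypothesis (valid for every center, which is indeed the key input you identify) applies on each; interpolating on each $B_{s'-s}(y_{i})$, enlarging the $\gamma$-integral to $B_{s'}$ and the $t$-integral to $B_{r}$, and applying Young once to the summed estimate gives, for $h(s):=\int_{B_{s}}|{\bf g}|^{\gamma}\operatorname{d}\!x$, an inequality of the form $h(s)\le\tfrac12\,h(s')+A\,(s'-s)^{-\alpha}\bigl(\fint_{B_{r}}|{\bf g}|^{t}\operatorname{d}\!x\bigr)^{\gamma/t}$, after which the iteration (absorption) lemma of Giusti/Giaquinta for functions of the radius — this, rather than hole-filling proper, is the lemma you want — delivers the claim with $C=C(C_{0},n,\tau,t)$, the powers of $r$ recombining into the scale-invariant averages. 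One caution: if you instead apply Young ball-by-ball before summing, the admissible $\varepsilon$ must be taken of order $1/N$, hence dependent on $s'-s$, and the resulting blow-up of $C(\varepsilon)$ has to be folded into the $(s'-s)^{-\alpha}$ term; your phrase "choosing $\varepsilon$ small enough" glosses exactly this point. With these details made explicit, your outline is the standard and correct proof of the quoted self-improvement.
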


We end this section by providing the following basic estimate, which will be utilized frequently to discuss the oscillation estimates. Let $E$ be a measurable subset in $\mathbb{R}^{n}$. For any ${\bf f}\in L^{p}(E, \mathbb{R}^{ m})$ with $p\in[1,\infty)$ and ${\bf h}\in\mathbb{R}^{ m}$ for $m\geq1$, we have
\begin{equation}\label{eqn-minimal2}
\left(\fint_{E}\left|\,{\bf f}(x)-\overline{{\bf f}}_{E}\,\right|^{p}\operatorname{d}\!x\right)^{\frac{1}{p}}\,\leq\,2\min_{{\bf h}\in\mathbb{R}^{ m}}\left(\fint_{E}\left|\,{\bf f}(x)-{\bf h}\,\right|^{p}\operatorname{d}\!x\right)^{\frac{1}{p}}\,.
\end{equation}

\subsection{Comparison systems and estimates.}

This subsection is devoted to compare the weak solution pair of \eqref{model} to that of desired problem for which we have known regularity results.
From now on, we consider the following localized problem, homogeneous problem and desired problem, respectively.
\begin{equation}\label{localmodel}
\left\{\begin{array}{r@{\ \ }c@{\ \ }ll}
\operatorname{div}\left({\bf A}(x)D{\bf u}\right)-\nabla\pi & =& \operatorname{div}{\bf F} & \mbox{in}\ \ B_{2R}(x_{0})\,, \\[0.05cm]
\operatorname{div} {\bf u}&=& 0 & \mbox{in}\ \ B_{2R}(x_{0})\,.
\end{array}\right.
\end{equation}
\begin{equation}\label{ComparisonSystem}
\left\{\begin{array}{r@{\ \ }c@{\ \ }ll}
\operatorname{div}\left({\bf A}(x)D{\bf v}\right)-\bf\nabla\pi_{ v} & =& {\bf 0} & \mbox{in}\ \ B_{2R}(x_{0})\,, \\[0.05cm]
\operatorname{div}{\bf v}&=& 0 & \mbox{in}\ \ B_{2R}(x_{0})\,, \\[0.05cm]
{\bf v}&=&{\bf u} & \mbox{on}\ \ \partial B_{2R}(x_{0})\,,
\end{array}\right.
\end{equation}
where $\pi_{\bf v}$ is an associated pressure of $\bf v$.
\begin{equation}\label{ComparisonSystem-2}
\left\{\begin{array}{r@{\ \ }c@{\ \ }ll}
\operatorname{div}\left(\overline{{\bf A}}_{B_{2R}(x_{0})}D{\bf w}\right)-\bf\nabla\pi_{ w} & =&  {\bf 0} & \mbox{in}\ \ B_{\frac{3R}{2}}(x_{0})\,, \\[0.05cm]
\operatorname{div}{\bf w}&=& 0 & \mbox{in}\ \ B_{\frac{3R}{2}}(x_{0})\,, \\[0.05cm]
{\bf w} &=&  {\bf v} & \mbox{on}\ \ \partial B_{\frac{3R}{2}}(x_{0})\,,
\end{array}\right.
\end{equation}
where $\pi_{\bf w}$ is an associated pressure of $\bf w$. Note that the definitions of the weak solution pairs to problem \eqref{localmodel}\,, \eqref{ComparisonSystem} and \eqref{ComparisonSystem-2} can be similarly determined as Definition \ref{Def-weaksolution}\,.

Now we proceed by establishing a comparison estimate regarding to
$D{\bf u}$ and $D{\bf v}$\,, as well as the associated
pressures $\pi$ and $\pi_{\bf v}$\,.
\begin{lemma}\label{approximation1}
Let $\left({\bf u},\pi\right)$ be a weak solution pair to \eqref{localmodel} with ${\bf F}\in L^{2}(\Omega\,,\mathbb{R}^{n\times n})$\,, then one can find a weak solution pair $\left({\bf v},\pi_{\bf v}\right)$ to \eqref{ComparisonSystem} such that
\begin{eqnarray}\label{inequ-comparisonlemma1}
  \fint_{B_{2R}(x_{0})}\left|D{\bf u}-D{\bf v}\right|^{2}+\left|\pi-\pi_{\bf v}\right|^{2}\operatorname{d}\!x
\leq C\fint_{B_{2R}(x_{0})}\!\left|{\bf F}-\overline{{\bf F}}_{B_{2R}(x_{0})}\right|^{2}\operatorname{d}\!x
\end{eqnarray}
and
\begin{equation}\label{u-v}
   \fint_{B_{2R}(x_{0})}\left|{\bf u}-{\bf v}\right|^{2}\operatorname{d}\!x\leq CR^{2}\fint_{B_{2R}(x_{0})}\!\left|{\bf F}-\overline{{\bf F}}_{B_{2R}(x_{0})}\right|^{2}\operatorname{d}\!x
\end{equation}
for almost all $x_{0}\in\Omega$ and every $B_{2R}(x_{0})\subset\Omega$, where the positive constant $C=C(n,\lambda,\Lambda)$.
\end{lemma}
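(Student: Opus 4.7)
The plan is to compare $({\bf u},\pi)$ with the unique weak solution pair $({\bf v},\pi_{\bf v})$ of the homogeneous Dirichlet problem \eqref{ComparisonSystem}, whose existence follows from the same Lax--Milgram plus de Rham argument underlying Definition \ref{Def-weaksolution} together with Lemma \ref{uniqueness-Johndomain}. Normalize the pressures so that $\int_{B_{2R}(x_{0})}(\pi-\pi_{\bf v})\operatorname{d}\!x=0$. The key observation is that ${\bf u}-{\bf v}\in W_{0,\sigma}^{1,2}(B_{2R}(x_{0}),\mathbb{R}^{n})$: it is divergence free and vanishes on $\partial B_{2R}(x_{0})$, so it is an admissible divergence-free test function in the weak formulations of both \eqref{localmodel} and \eqref{ComparisonSystem}.

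Subtracting these two formulations and plugging in $\phi={\bf u}-{\bf v}$, the pressure terms disappear (each test function is divergence free), leaving
\begin{equation*}
\int_{B_{2R}(x_{0})}\!\bigl\langle {\bf A}(x) D({\bf u}-{\bf v}),D({\bf u}-{\bf v})\bigr\rangle\operatorname{d}\!x
=\int_{B_{2R}(x_{0})}\!\bigl\langle {\bf F},D({\bf u}-{\bf v})\bigr\rangle\operatorname{d}\!x.
\end{equation*}
Because ${\bf u}-{\bf v}$ vanishes on $\partial B_{2R}(x_{0})$, the divergence theorem gives $\int D({\bf u}-{\bf v})\operatorname{d}\!x=0$, so on the right-hand side we may replace ${\bf F}$ by ${\bf F}-\overline{{\bf F}}_{B_{2R}(x_{0})}$. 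Applying the ellipticity \eqref{uniformellipticity} on the left and Young's inequality on the right then yields the estimate
$\fint |D({\bf u}-{\bf v})|^{2}\operatorname{d}\!x\leq C\fint|{\bf F}-\overline{{\bf F}}_{B_{2R}(x_{0})}|^{2}\operatorname{d}\!x$, accounting for the gradient half of \eqref{inequ-comparisonlemma1}.

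For the pressure half I would invoke Lemma \ref{existence-Johndomain} applied to $g:=\pi-\pi_{\bf v}$, which has zero mean on $B_{2R}(x_{0})$ by our normalization: there exists ${\bf \psi}\in W_{0}^{1,2}(B_{2R}(x_{0}),\mathbb{R}^{n})$ with $\operatorname{div}{\bf \psi}=\pi-\pi_{\bf v}$ and $\|\nabla{\bf \psi}\|_{L^{2}}\leq C\|\pi-\pi_{\bf v}\|_{L^{2}}$. Testing the subtracted weak formulation with ${\bf \psi}$ produces
\begin{equation*}
\int_{B_{2R}(x_{0})}\!(\pi-\pi_{\bf v})^{2}\operatorname{d}\!x
=\int_{B_{2R}(x_{0})}\!\bigl\langle {\bf A}(x)D({\bf u}-{\bf v})-({\bf F}-\overline{{\bf F}}_{B_{2R}(x_{0})}),D{\bf \psi}\bigr\rangle\operatorname{d}\!x,
\end{equation*}
where again I have subtracted the mean of ${\bf F}$ using that $\int D{\bf \psi}\operatorname{d}\!x=0$. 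Cauchy--Schwarz, the bound on $\|\nabla{\bf \psi}\|_{L^{2}}$, and the already-proved gradient comparison estimate combine to give $\|\pi-\pi_{\bf v}\|_{L^{2}}\leq C\|{\bf F}-\overline{{\bf F}}_{B_{2R}(x_{0})}\|_{L^{2}}$, completing \eqref{inequ-comparisonlemma1}.

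Finally, \eqref{u-v} follows by applying the Poincaré inequality on $B_{2R}(x_{0})$ to ${\bf u}-{\bf v}\in W_{0}^{1,2}$, which costs exactly the factor $R^{2}$, and then inserting the gradient estimate just established. The only real subtlety is the correct use of Lemma \ref{existence-Johndomain} together with the mean-zero normalization of $\pi-\pi_{\bf v}$; the rest is the standard divergence-free test function trick combined with the freedom to subtract $\overline{{\bf F}}_{B_{2R}(x_{0})}$, which is precisely what produces the oscillation of ${\bf F}$ on the right-hand side rather than ${\bf F}$ itself.
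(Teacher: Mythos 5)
Your proposal is correct and follows essentially the same route as the paper: test the subtracted system with the divergence-free function ${\bf u}-{\bf v}$ and use ellipticity plus Young's inequality for the gradient bound, then solve $\operatorname{div}\psi=\pi-\pi_{\bf v}$ via Lemma \ref{existence-Johndomain} (after normalizing the pressure to have zero mean using Lemma \ref{uniqueness-Johndomain}) to get the pressure bound, and finish with Poincar\'e for \eqref{u-v}. The only cosmetic difference is that the paper writes the subtracted right-hand side directly as $\operatorname{div}({\bf F}-\overline{{\bf F}}_{B_{2R}(x_{0})})$, whereas you justify the replacement by noting $\int D\phi\operatorname{d}\!x=0$ for $\phi\in W_{0}^{1,2}$; these are the same observation.
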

\begin{proof}
Let $\left({\bf u},\pi\right)$ and $\left({\bf v},\pi_{\bf v}\right)$ be the weak solution pairs to \eqref{localmodel} and \eqref{ComparisonSystem} respectively, then $({\bf u}-{\bf v}, \pi-\pi_{\bf v})\in W_{0,\sigma}^{1,2}(B_{2R}(x_{0}), \mathbb{R}^{n})\times L^{2}(B_{2R}(x_{0}))$ is a weak solution pair to
\begin{equation}\label{lemu-v-equ1}
\left\{\begin{array}{r@{\ \ }c@{\ \ }ll}
\operatorname{div}\left({\bf A}(x)\left(D{\bf u}-D{\bf v}\right)\right)-\nabla\left(\pi-\pi_{\bf v}\right) & =& \operatorname{div}\left({\bf F}-\overline{{\bf F}}_{B_{2R}(x_{0})}\right) & \mbox{in}\ \ B_{2R}(x_{0})\,, \\[0.05cm]
\operatorname{div}\left({\bf u}-{\bf v}\right)&=& 0 & \mbox{in}\ \ B_{2R}(x_{0})\,, \\[0.05cm]
{\bf u}-{\bf v}&=& {\bf 0}& \mbox{on}\ \ \partial B_{2R}(x_{0})\,.
\end{array}\right.
\end{equation}
We can select $ {\bf u}-{\bf v}$ as
a divergence free test function for \eqref{lemu-v-equ1}. Thus by virtue of the uniform ellipticity condition \eqref{uniformellipticity} and Young's inequality, we obtain
\begin{eqnarray}\label{lemu-v-inequ2}
 && \lambda\,\fint_{B_{2R}(x_{0})}\left|D{\bf u}-D{\bf  v}\right|^{2}\operatorname{d}\!x\nonumber\\
&\leq& \,\fint_{B_{2R}(x_{0})} \left\langle\,{\bf A}(x)\left(D{\bf  u}-D{\bf v}\right),D{\bf u}-D{\bf  v}\right\rangle\operatorname{d}\!x,\nonumber\\
&=& \fint_{B_{2R}(x_{0})}\left\langle{\bf F}-\overline{{\bf F}}_{B_{2R}(x_{0})}\,,D{\bf u}-D{\bf v}\right\rangle\,\operatorname{d}\!x,\nonumber\\
&\leq&\tau_{1}\fint_{B_{2R}(x_{0})}\left|D{\bf  u}-D{\bf v}\right|^{2}\operatorname{d}\!x+ C(\tau_{1})\,\fint_{B_{2R}(x_{0})}\left|{\bf F}-\overline{{\bf F}}_{B_{2R}(x_{0})}\right|^{2}\operatorname{d}\!x\,.
\end{eqnarray}
Choosing $\tau_{1}=\frac{\lambda}{2}$\,, then we derive
\begin{equation}\label{lemu-v-inequ3}
\fint_{B_{2R}(x_{0})}\left|D{\bf u}-D{\bf v}\right|^{2}\operatorname{d}\!x\leq C(\lambda)\fint_{B_{2R}(x_{0})}\!\left|{\bf F}-\overline{{\bf F}}_{B_{2R}(x_{0})}\right|^{2}\operatorname{d}\!x\,.
\end{equation}
Applying Poincar\'{e}'s inequality, we deduce \eqref{u-v}, that is
\begin{eqnarray*}
\fint_{B_{2R}(x_{0})}\left|{\bf u}-{\bf v}\right|^{2}\operatorname{d}\!x&\leq& C(n)R^{2}\fint_{B_{2R}(x_{0})}\!\left|D{\bf u}-D{\bf v}\right|^{2}\operatorname{d}\!x,\\
   &\leq& C(n,\lambda)R^{2}\fint_{B_{2R}(x_{0})}\!\left|{\bf F}-\overline{{\bf F}}_{B_{2R}(x_{0})}\right|^{2}\operatorname{d}\!x.
\end{eqnarray*}

On the other hand, let $\varphi\in W_{0}^{1,2}\left(B_{2R}(x_{0}),\mathbb{R}^{n}\right)$ be a test function of \eqref{lemu-v-equ1}\, then we have
\begin{equation}\label{lemu-v-equ4}
  \int_{B_{2R}(x_{0})}\left(\pi-\pi_{\bf v}\right)\operatorname{div}\varphi \operatorname{d}\!x= \int_{B_{2R}(x_{0})}\big\langle\left[{\bf A}(x)\left(D{\bf  u}-D{\bf v}\right)-\left({\bf F}-\overline{{\bf F}}_{B_{2R}(x_{0})}\right)\right],\nabla\varphi\big\rangle \operatorname{d}\!x.
\end{equation}
More precisely, selecting the above ${\bf \varphi}$ be a solution of the auxiliary problem
\begin{equation}\label{auxiliaryprob}
\left\{\begin{array}{r@{\ \ }c@{\ \ }ll}
\operatorname{div}{\bf \varphi} & =&\pi-\pi_{\bf v} & \mbox{in}\ \ B_{2R}(x_{0})\,, \\[0.05cm]
{\bf \varphi} &=&  0 & \mbox{on}\ \ \partial B_{2R}(x_{0})\,,
\end{array}\right.
\end{equation}
where $\pi-\pi_{\bf v}\in L^{2}(B_{2R}(x_{0}))$. By virtue of
Lemma \ref{uniqueness-Johndomain} to reveal that the associated
pressure is determined uniquely up to a constant, so we can find a weak solution pair $\left({\bf v},\pi_{\bf v}\right)$  to \eqref{ComparisonSystem} such that
$\int_{B_{2R}(x_{0})}\pi-\pi_{\bf v}\operatorname{d}\!x=0$.
Then the existence of such a solution to auxiliary problem
\eqref{auxiliaryprob} is ensured by Lemma \ref{existence-Johndomain}
and we obtain
\begin{equation}\label{lemu-v-inequ5}
  \left\|\bf\nabla\varphi\right\|_{L^{2}\left(B_{2R}(x_{0})\right)}\leq c_{1}\left\|\pi-\pi_{\bf v}\right\|_{L^{2}\left(B_{2R}(x_{0})\right)},
\end{equation}
where the positive constant $c_{1}$ depends only on $n$. Substituting such $\varphi$ into equality
\eqref{lemu-v-equ4} and combining Young's inequality with
\eqref{lemu-v-inequ5} yield that
\begin{eqnarray*}
   && \int_{B_{2R}(x_{0})}\left|\pi-\pi_{\bf v}\right|^{2} \operatorname{d}\!x\nonumber\\
   &=& \int_{B_{2R}(x_{0})}\big\langle\left[{\bf A}(x)\left(D{\bf  u}-D{\bf v}\right)-\left({\bf F}-\overline{{\bf F}}_{B_{2R}(x_{0})}\right)\right],{\bf\nabla\varphi}\big\rangle \operatorname{d}\!x, \nonumber\\
  &\leq& C(\tau_{2})\int_{B_{2R}(x_{0})}\left|{\bf A}(x)\left(D{\bf  u}-D{\bf v}\right)-\left({\bf F}-\overline{{\bf F}}_{B_{2R}(x_{0})}\right)\right|^{2}\operatorname{d}\!x+\tau_{2} \int_{B_{2R}(x_{0})}\left|\bf\nabla\varphi\right|^{2}\operatorname{d}\!x,\nonumber \\
   &\leq& C(\tau_{2})\int_{B_{2R}(x_{0})}\left|{\bf A}(x)\left(D{\bf  u}-D{\bf v}\right)-\left({\bf F}-\overline{{\bf F}}_{B_{2R}(x_{0})}\right)\right|^{2}\operatorname{d}\!x+c_{1}^{2}\tau_{2} \int_{B_{2R}(x_{0})}\left|\pi-\pi_{\bf v}\right|^{2}\operatorname{d}\!x.
\end{eqnarray*}
We can choose the positive constant $\tau_{2}$ sufficiently small
such that $c_{1}^{2}\tau_{2}\leq\frac{1}{2}$ and utilize the boundedness of ${\bf A}$ and \eqref{lemu-v-inequ3}, one infers that
\begin{eqnarray}\label{lemu-v-inequ6}
 && \fint_{B_{2R}(x_{0})}\left|\pi-\pi_{\bf v}\right|^{2} \operatorname{d}\!x\nonumber\\
  &\leq&C\fint_{B_{2R}(x_{0})}\left|D{\bf  u}-D{\bf v}\right|^{2}\operatorname{d}\!x + C\fint_{B_{2R}(x_{0})}\left|{\bf F}-\overline{{\bf F}}_{B_{2R}(x_{0})}\right|^{2}\operatorname{d}\!x,\nonumber\\
  &\leq& C\fint_{B_{2R}(x_{0})}\!\left|{\bf F}-\overline{{\bf F}}_{B_{2R}(x_{0})}\right|^{2}\operatorname{d}\!x\,,
\end{eqnarray}
where $C=C(n,\lambda,\Lambda)$. Hence, a combination of \eqref{lemu-v-inequ3} and \eqref{lemu-v-inequ6} concludes \eqref{inequ-comparisonlemma1}. The proof of Lemma \ref{approximation1} is completed.
\end{proof}

The second part of this subsection will be turned to derive the comparison estimate for ${\bf\nabla v}$ and the associated pressure
$\pi_{\bf v}$ to \eqref{ComparisonSystem} with ${\bf\nabla w}$ and
the associated pressure $\pi_{\bf w}$ to
\eqref{ComparisonSystem-2}\,.

\begin{lemma}\label{approximation2}
Let $\left( {\bf v},\pi_{\bf v}\right)$ be a weak solution pair to \eqref{ComparisonSystem}, then there exist a weak solution pair $\left({\bf w},\pi_{\bf w}\right)$ to \eqref{ComparisonSystem-2} and an exponent $q=q(n,\lambda,\Lambda)>2$ such that
\begin{equation}\label{inequ-comparisonlemma2}
\fint_{B_{\frac{3R}{2}}(x_{0})}\left|D{\bf v}-D{\bf w}\right|^{2}+\left|\pi_{\bf v}-\pi_{\bf  w}\right|^{2}\operatorname{d}\!x\leq C\left([{\bf A}]_{\operatorname{ BMO}}^{2}(2R)\right)^{1-\frac{2}{q}}\left(\fint_{B_{2R}(x_{0})}\left|D{\bf v}\right|\operatorname{d}\!x\right)^{2}\,,
\end{equation}
for almost all $x_{0}\in\Omega$ and every $B_{2R}(x_{0})\subset\Omega$, where the positive constant $C=C(n,\lambda,\Lambda)$.
\end{lemma}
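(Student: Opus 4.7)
The plan is to follow the same two-step scheme as in Lemma \ref{approximation1}: first bound $\fint |D{\bf v}-D{\bf w}|^{2}$ by energy testing, then recover the pressure via an auxiliary divergence problem.

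First I would observe that by subtracting the two systems, the pair $({\bf v}-{\bf w},\pi_{\bf v}-\pi_{\bf w})\in W^{1,2}_{0,\sigma}(B_{\frac{3R}{2}}(x_{0}),\mathbb{R}^{n})\times L^{2}(B_{\frac{3R}{2}}(x_{0}))$ solves
\begin{equation*}
\left\{\begin{array}{r@{\ \ }c@{\ \ }ll}
\operatorname{div}\bigl(\overline{{\bf A}}_{B_{2R}(x_{0})}D({\bf v}-{\bf w})\bigr)-\nabla(\pi_{\bf v}-\pi_{\bf w}) & =& \operatorname{div}\bigl((\overline{{\bf A}}_{B_{2R}(x_{0})}-{\bf A}(x))D{\bf v}\bigr) & \mbox{in } B_{\frac{3R}{2}}(x_{0}), \\[0.1cm]
\operatorname{div}({\bf v}-{\bf w}) &=& 0 & \mbox{in } B_{\frac{3R}{2}}(x_{0}), \\[0.1cm]
{\bf v}-{\bf w}&=& {\bf 0} & \mbox{on } \partial B_{\frac{3R}{2}}(x_{0}).
\end{array}\right.
\end{equation*}
Normalizing $\pi_{\bf w}$ so that $\int_{B_{\frac{3R}{2}}(x_{0})}(\pi_{\bf v}-\pi_{\bf w})\,dx=0$ (as justified by Lemma \ref{uniqueness-Johndomain}), I would test with the divergence-free function ${\bf v}-{\bf w}$ and use the ellipticity \eqref{uniformellipticity} on the constant-coefficient left-hand side, together with Young's inequality, to obtain
\begin{equation*}
\fint_{B_{\frac{3R}{2}}(x_{0})}\!\!|D{\bf v}-D{\bf w}|^{2}\operatorname{d}\!x\leq C(\lambda)\fint_{B_{\frac{3R}{2}}(x_{0})}\!\!\bigl|{\bf A}(x)-\overline{{\bf A}}_{B_{2R}(x_{0})}\bigr|^{2}|D{\bf v}|^{2}\operatorname{d}\!x.
\end{equation*}

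The key step is to absorb the coefficient oscillation into the $BMO$ quantity. I would apply Hölder's inequality with exponents $q/2$ and $q/(q-2)$, where $q>2$ is the self-improved reverse-Hölder exponent from \eqref{v-reverseholder} for $D{\bf v}$ (legitimate since ${\bf v}$ solves a homogeneous Stokes system with $BMO$-small coefficients on $B_{2R}(x_{0})$, so Lemma \ref{reverse-holder} applies). Using the a priori bound $|{\bf A}-\overline{{\bf A}}_{B_{2R}(x_{0})}|\leq 2\Lambda$ to interpolate $L^{2q/(q-2)}$ between $L^{2}$ and $L^{\infty}$, together with \eqref{smallBMO}, yields
\begin{equation*}
\left(\fint_{B_{\frac{3R}{2}}(x_{0})}\!\!\bigl|{\bf A}-\overline{{\bf A}}_{B_{2R}(x_{0})}\bigr|^{\frac{2q}{q-2}}\operatorname{d}\!x\right)^{\frac{q-2}{q}}\leq C(\Lambda)\bigl([{\bf A}]_{\operatorname{BMO}}^{2}(2R)\bigr)^{1-\frac{2}{q}},
\end{equation*}
and the reverse Hölder inequality controls $\bigl(\fint_{B_{\frac{3R}{2}}}|D{\bf v}|^{q}\bigr)^{2/q}$ by $\bigl(\fint_{B_{2R}}|D{\bf v}|\bigr)^{2}$. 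Combining these yields the $D{\bf v}-D{\bf w}$ part of \eqref{inequ-comparisonlemma2}.

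For the pressure, I would imitate the argument in Lemma \ref{approximation1}: take $\varphi\in W^{1,2}_{0}(B_{\frac{3R}{2}}(x_{0}),\mathbb{R}^{n})$ solving $\operatorname{div}\varphi=\pi_{\bf v}-\pi_{\bf w}$ via Lemma \ref{existence-Johndomain}, use it as test function in the weak formulation to get $\int|\pi_{\bf v}-\pi_{\bf w}|^{2}\,dx=\int\langle\overline{{\bf A}}_{B_{2R}(x_{0})}D({\bf v}-{\bf w})-({\bf A}-\overline{{\bf A}}_{B_{2R}(x_{0})})D{\bf v},\nabla\varphi\rangle\,dx$, apply Young's inequality, absorb the $\|\nabla\varphi\|_{L^{2}}^{2}\lesssim\|\pi_{\bf v}-\pi_{\bf w}\|_{L^{2}}^{2}$ term, and bound the remainder by the $D{\bf v}-D{\bf w}$ estimate just obtained plus the same $|{\bf A}-\overline{{\bf A}}|^{2}|D{\bf v}|^{2}$ integral, which is handled by the same Hölder/reverse-Hölder trick. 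The main obstacle is conceptually the reverse-Hölder self-improvement: one must make sure that $q$ in \eqref{v-reverseholder} depends only on $n,\lambda,\Lambda$ (which is built into the hypothesis), and that the exponent $1-2/q$ on the $BMO$ quantity is exactly what emerges from the Hölder split — a computation that must be done carefully but is otherwise routine.
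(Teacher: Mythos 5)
Your proposal is correct and follows essentially the same route as the paper: subtract the systems, test with the divergence-free difference ${\bf v}-{\bf w}$, split the coefficient-oscillation term by H\"older with exponents $q/2$ and $q/(q-2)$, use the self-improved reverse H\"older inequality \eqref{v-reverseholder} together with Lemma \ref{reverse-holder} and the bound $|{\bf A}-\overline{{\bf A}}|\leq 2\Lambda$ to produce $\bigl([{\bf A}]_{\operatorname{BMO}}^{2}(2R)\bigr)^{1-\frac{2}{q}}$, and recover the pressure via Lemmas \ref{existence-Johndomain} and \ref{uniqueness-Johndomain} exactly as in Lemma \ref{approximation1}. The only cosmetic remark is that the reverse H\"older estimate \eqref{v-reverseholder} comes from the Giaquinta--Modica theory for merely bounded elliptic coefficients, so the $BMO$-smallness you invoke is not needed at that step.
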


\begin{proof}
A direct computation reveals that $\left( {\bf v}-{\bf w},\pi_{\bf v}-\pi_{\bf w}\right)\in W_{0,\sigma}^{1,2}(B_{\frac{3R}{2}}(x_{0}), \mathbb{R}^{n})\times L^{2}(B_{\frac{3R}{2}}(x_{0}))$ is a weak solution pair to
\begin{equation}\label{lemv-w-equ1}
\left\{\begin{array}{r@{\ \ }c@{\ \ }ll}
\operatorname{div}\left(\overline{{\bf A}}_{B_{2R}(x_{0})}\left(D{\bf v}-D{\bf w}\right)\right)-\nabla\left(\pi_{\bf v}-\pi_{\bf w}\right) & =& \operatorname{div}\left(\left(\overline{{\bf A}}_{B_{2R}(x_{0})}-{\bf A}(x)\right)D{\bf v}\right) & \mbox{in}\ \ B_{\frac{3R}{2}}(x_{0})\,, \\[0.05cm]
\operatorname{div}\left({\bf v}- {\bf w}\right)&=& 0 & \mbox{in}\ \ B_{\frac{3R}{2}}(x_{0})\,, \\[0.05cm]
{\bf v}- {\bf w} &=& {\bf 0} & \mbox{on}\ \
\partial B_{\frac{3R}{2}}(x_{0})\,.
\end{array}\right.
\end{equation}
Selecting ${\bf v}- {\bf w}$ as a divergence free test function of \eqref{lemv-w-equ1}, then a combination of the uniform ellipticity condition \eqref{uniformellipticity} and Young's inequality yields that
\begin{eqnarray}\label{lemv-w-inequ2}
 && \lambda\,\fint_{B_{\frac{3R}{2}}(x_{0})}\left|D{\bf v}-D{\bf w}\right|^{2}\operatorname{d}\!x\nonumber\\
&\leq& \,\fint_{B_{\frac{3R}{2}}(x_{0})} \left\langle\,\overline{{\bf A}}_{B_{2R}(x_{0})}\left(D{\bf v}-D{\bf w}\right),D{\bf v}-D{\bf  w}\right\rangle\operatorname{d}\!x,\nonumber\\
&=& \fint_{B_{\frac{3R}{2}}(x_{0})}\left\langle\left(\overline{{\bf A}}_{B_{2R}(x_{0})}-{\bf A}(x)\right)D{\bf v}\,,D{\bf v}-D{\bf w}\right\rangle\,\operatorname{d}\!x,\nonumber\\
&\leq&\epsilon_{1}\fint_{B_{\frac{3R}{2}}(x_{0})}\left|D{\bf v}-D{\bf w}\right|^{2}\operatorname{d}\!x+ C(\epsilon_{1})\,\fint_{B_{\frac{3R}{2}}(x_{0})}\left|\overline{{\bf A}}_{B_{2R}(x_{0})}-{\bf A}(x)\right|^{2}\left|D{\bf v}\right|^{2}\operatorname{d}\!x\,,
\end{eqnarray}
where we select $\epsilon_{1}=\frac{\lambda}{2}$. By virtue of \cite[Theorem 2.2]{GMo}, there exists an exponent $q=q(\lambda,\Lambda,n)>2$ such that the following reverse H\"{o}lder type inequality
\begin{equation}\label{v-reverseholder}
  \fint_{B_{\frac{3R}{2}}(x_{0})}|D{\bf v}|^{q}dx\leq C\left(\fint_{B_{2R}(x_{0})}|D{\bf v}|^{2}dx\right)^{\frac{q}{2}}
\end{equation}
holds. Then it follows from Lemma \ref{reverse-holder} that
\begin{equation}\label{reverse-Dv}
  \fint_{B_{\frac{3R}{2}}(x_{0})}|D{\bf v}|^{q}dx\leq C\left(\fint_{B_{2R}(x_{0})}|D{\bf v}|dx\right)^{q},
\end{equation}
where $C=C(n,\lambda,\Lambda)$. Applying H\"{o}lder's inequality, \eqref{reverse-Dv}, the boundedness of $\bf A$ and \eqref{smallBMO} to last term on the right side of \eqref{lemv-w-inequ2}, we derive
\begin{eqnarray}\label{lemv-w-inequ3}
 && \fint_{B_{\frac{3R}{2}}(x_{0})}\left|D{\bf v}-D{\bf w}\right|^{2}\operatorname{d}\!x \nonumber \\
  &\leq& C(\lambda)\,\left(\fint_{B_{\frac{3R}{2}}(x_{0})}\left|D{\bf v}\right|^{q}\operatorname{d}\!x\right)^{\frac{2}{q}}\left(\fint_{B_{\frac{3R}{2}}(x_{0})}\left|\overline{{\bf A}}_{B_{2R}(x_{0})}-{\bf A}(x)\right|^{\frac{2q}{q-2}}\operatorname{d}\!x\right)^{\frac{q-2}{q}}\,, \nonumber\\
   &\leq&C(n,\lambda,\Lambda)\,\left(\fint_{B_{2R}(x_{0})}\left|D{\bf v}\right|\operatorname{d}\!x\right)^{2}\left(\fint_{B_{2R}(x_{0})}\left|\overline{{\bf A}}_{B_{2R}(x_{0})}-{\bf A}(x)\right|^{2+\frac{4}{q-2}}\operatorname{d}\!x\right)^{\frac{q-2}{q}}\,, \nonumber\\
 &\leq&C(n,\lambda,\Lambda)\,\left(\fint_{B_{2R}(x_{0})}\left|D{\bf v}\right|\operatorname{d}\!x\right)^{2}\left(\fint_{B_{2R}(x_{0})}\left|\overline{{\bf A}}_{B_{2R}(x_{0})}-{\bf A}(x)\right|^{2}\operatorname{d}\!x\right)^{\frac{q-2}{q}}\,, \nonumber\\
 &\leq&C(n,\lambda,\Lambda)\left([{\bf A}]_{\operatorname{ BMO}}^{2}(2R)\right)^{1-\frac{2}{q}}\left(\fint_{B_{2R}(x_{0})}\left|D{\bf v}\right|\operatorname{d}\!x\right)^{2}\,.
\end{eqnarray}

Furthermore, it remains to be estimated $\fint_{B_{\frac{3R}{2}}(x_{0})}\left|\pi_{\bf v}-\pi_{\bf w}\right|^{2}\operatorname{d}\!x$. Choosing $\phi\in W_{0}^{1,2}\left(B_{\frac{3R}{2}}(x_{0}),\mathbb{R}^{n}\right)$ as a test function of \eqref{lemv-w-equ1}, that is to say
\begin{equation}\label{lemv-w-equ4}
  \fint_{B_{\frac{3R}{2}}(x_{0})}\left(\pi_{\bf v}-\pi_{\bf w}\right)\operatorname{div}\phi \operatorname{d}\!x= \fint_{B_{\frac{3R}{2}}(x_{0})}\big\langle\left[\overline{{\bf A}}_{B_{2R}(x_{0})}\left(D{\bf v}-D{\bf w}\right)-\left(\overline{{\bf A}}_{B_{2R}(x_{0})}-{\bf A}(x)\right)D{\bf v}\right],\nabla\phi\big\rangle \operatorname{d}\!x.
\end{equation}
More precisely, selecting the above $\phi$ be a solution of the auxiliary problem
\begin{equation}\label{auxiliaryprob2}
\left\{\begin{array}{r@{\ \ }c@{\ \ }ll}
\operatorname{div}{\bf \phi} & =&\pi_{\bf v}-\pi_{\bf w} & \mbox{in}\ \ B_{\frac{3R}{2}}(x_{0})\,, \\[0.05cm]
{\bf \phi} &=&  0 & \mbox{on}\ \ \partial B_{\frac{3R}{2}}(x_{0})\,,
\end{array}\right.
\end{equation}
where $\pi_{\bf v}-\pi_{\bf w}\in L^{2}(B_{\frac{3R}{2}}(x_{0}))$.
Lemma \ref{uniqueness-Johndomain} indicates that the associated
pressure is determined uniquely up to a constant, so we can find a weak solution pair $\left({\bf w},\pi_{\bf w}\right)$  to \eqref{ComparisonSystem-2} such that
$\int_{B_{\frac{3R}{2}}(x_{0})}\pi_{\bf v}-\pi_{\bf w}\operatorname{d}\!x=0$.
Then the existence of such a solution to auxiliary problem
\eqref{auxiliaryprob2} is ensured by Lemma \ref{existence-Johndomain} and we have
\begin{equation}\label{lemv-w-inequ5}
  \left\|\bf\nabla\phi\right\|_{L^{2}\left(B_{\frac{3R}{2}}(x_{0})\right)}\leq c_{2}\left\|\pi_{\bf v}-\pi_{\bf w}\right\|_{L^{2}\left(B_{\frac{3R}{2}}(x_{0})\right)},
\end{equation}
where the positive constant $c_{2}$ depends only on $n$. Substituting such $\phi$ into equality
\eqref{lemv-w-equ4} and combining Young's inequality with
\eqref{lemv-w-inequ3}, \eqref{lemv-w-inequ5} and the boundedness of ${\bf A}$ deduce that
\begin{eqnarray*}
   && \fint_{B_{\frac{3R}{2}}(x_{0})}\left|\pi_{\bf v}-\pi_{\bf w}\right|^{2} \operatorname{d}\!x\nonumber\\
  &\leq& C(\epsilon_{2})\fint_{B_{\frac{3R}{2}}(x_{0})}\left|\overline{{\bf A}}_{B_{2R}(x_{0})}\left(D{\bf v}-D{\bf w}\right)-\left(\overline{{\bf A}}_{B_{2R}(x_{0})}-{\bf A}(x)\right)D{\bf v}\right|^{2}\operatorname{d}\!x+\epsilon_{2} \fint_{B_{\frac{3R}{2}}(x_{0})}\left|\bf\nabla\phi\right|^{2}\operatorname{d}\!x,\\
   &\leq& C(\epsilon_{2},n,\Lambda)\fint_{B_{\frac{3R}{2}}(x_{0})}\left|D{\bf v}-D{\bf w}\right|^{2}\operatorname{d}\!x+ C(\epsilon_{2},n,\lambda,\Lambda)\left([{\bf A}]_{\operatorname{ BMO}}^{2}(2R)\right)^{1-\frac{2}{q}}\left(\fint_{B_{\frac{3R}{2}}(x_{0})}\left|D{\bf v}\right|\operatorname{d}\!x\right)^{2}\\
   &&+c_{2}^{2}\epsilon_{2} \fint_{B_{\frac{3R}{2}}(x_{0})}\left|\pi_{\bf v}-\pi_{\bf w}\right|^{2}\operatorname{d}\!x,\\
    &\leq& C(\epsilon_{2},n,\lambda,\Lambda)\left([{\bf A}]_{\operatorname{ BMO}}^{2}(2R)\right)^{1-\frac{2}{q}}\left(\fint_{B_{\frac{3R}{2}}(x_{0})}\left|D{\bf v}\right|\operatorname{d}\!x\right)^{2}+c_{2}^{2}\epsilon_{2} \fint_{B_{\frac{3R}{2}}(x_{0})}\left|\pi_{\bf v}-\pi_{\bf w}\right|^{2}\operatorname{d}\!x.
\end{eqnarray*}
Finally, choosing the positive constant $\epsilon_{2}$ sufficiently small
such that $c_{2}^{2}\epsilon_{2}\leq\frac{1}{2}$, we conclude that
\begin{equation}\label{lemv-w-inequ6}
 \fint_{B_{\frac{3R}{2}}(x_{0})}\left|\pi_{\bf v}-\pi_{\bf w}\right|^{2} \operatorname{d}\!x
  \leq C\left([{\bf A}]_{\operatorname{ BMO}}^{2}(2R)\right)^{1-\frac{2}{q}}\left(\fint_{B_{\frac{3R}{2}}(x_{0})}\left|D{\bf v}\right|\operatorname{d}\!x\right)^{2},
\end{equation}
where $C=C(n,\lambda,\Lambda)$. Hence, a combination of \eqref{lemv-w-inequ3} and \eqref{lemv-w-inequ6} yields \eqref{inequ-comparisonlemma2}. The proof of Lemma \ref{approximation2} is completed.
\end{proof}

Ultimately, by virtue of Lemma \ref{approximation1}\,, Lemma \ref{approximation2}\, with H\"{o}lder's inequality, we can directly obtain the comparison with $D{\bf u}$, $\pi$ and $D{\bf w}$, $\pi_{\bf w}$ as follows.
\begin{lemma}\label{Lemma-comparison}
Let $\left( {\bf u},\pi\right)$ be a weak solution pair to \eqref{localmodel}, then there exist a weak solution pair $\left({\bf w},\pi_{\bf w}\right)$ to \eqref{ComparisonSystem-2} and an exponent $q=q(n,\lambda,\Lambda)>2$ such that
\begin{eqnarray}\label{inequ-comparison}
\fint_{B_{\frac{3R}{2}}(x_{0})}\left|D{\bf u}-D{\bf w}\right|^{2}+\left|\pi-\pi_{\bf  w}\right|^{2}\operatorname{d}\!x
&\leq& C\left([{\bf A}]_{\operatorname{ BMO}}^{2}(2R)\right)^{1-\frac{2}{q}}\left(\fint_{B_{2R}(x_{0})}\left|D{\bf u}\right|\operatorname{d}\!x\right)^{2}\nonumber\\
&&+C\left(1+\left([{\bf A}]_{\operatorname{ BMO}}^{2}(2R)\right)^{1-\frac{2}{q}}\right)\fint_{B_{2R}(x_{0})}\!\left|{\bf F}-\overline{{\bf F}}_{B_{2R}(x_{0})}\right|^{2}\operatorname{d}\!x\nonumber\\
\end{eqnarray}
for almost all $x_{0}\in\Omega$ and every $B_{2R}(x_{0})\subset\Omega$, where the positive constant $C=C(n,\lambda,\Lambda)$.
\end{lemma}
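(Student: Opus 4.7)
The plan is to deduce the comparison between $(D{\bf u},\pi)$ and $(D{\bf w},\pi_{\bf w})$ by inserting the intermediate pair $({\bf v},\pi_{\bf v})$ associated to the frozen-coefficient system \eqref{ComparisonSystem} and invoking Lemmas \ref{approximation1} and \ref{approximation2} in succession. Specifically, I would write
\begin{equation*}
|D{\bf u}-D{\bf w}|^{2}+|\pi-\pi_{\bf w}|^{2}\leq 2\left(|D{\bf u}-D{\bf v}|^{2}+|\pi-\pi_{\bf v}|^{2}\right)+2\left(|D{\bf v}-D{\bf w}|^{2}+|\pi_{\bf v}-\pi_{\bf w}|^{2}\right),
\end{equation*}
and take the average over $B_{\frac{3R}{2}}(x_{0})$. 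Since $B_{\frac{3R}{2}}(x_{0})\subset B_{2R}(x_{0})$ with $|B_{2R}|/|B_{\frac{3R}{2}}|=(4/3)^{n}$, averages on the smaller ball are controlled up to a dimensional constant by averages on $B_{2R}(x_{0})$, so Lemma \ref{approximation1} directly bounds the first bracket by $C\fint_{B_{2R}(x_{0})}|{\bf F}-\overline{{\bf F}}_{B_{2R}(x_{0})}|^{2}\operatorname{d}\!x$.

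For the second bracket, Lemma \ref{approximation2} gives the bound
\begin{equation*}
C\left([{\bf A}]_{\operatorname{BMO}}^{2}(2R)\right)^{1-\frac{2}{q}}\left(\fint_{B_{2R}(x_{0})}|D{\bf v}|\operatorname{d}\!x\right)^{2},
\end{equation*}
so the remaining task is to replace the $L^{1}$-average of $|D{\bf v}|$ by the $L^{1}$-average of $|D{\bf u}|$ plus an error controlled by ${\bf F}-\overline{{\bf F}}_{B_{2R}(x_{0})}$. I would use the triangle inequality
\begin{equation*}
\fint_{B_{2R}(x_{0})}|D{\bf v}|\operatorname{d}\!x\leq \fint_{B_{2R}(x_{0})}|D{\bf u}|\operatorname{d}\!x+\fint_{B_{2R}(x_{0})}|D{\bf u}-D{\bf v}|\operatorname{d}\!x,
\end{equation*}
followed by H\"older's inequality on the last term and Lemma \ref{approximation1} to get
\begin{equation*}
\left(\fint_{B_{2R}(x_{0})}|D{\bf u}-D{\bf v}|\operatorname{d}\!x\right)^{2}\leq \fint_{B_{2R}(x_{0})}|D{\bf u}-D{\bf v}|^{2}\operatorname{d}\!x\leq C\fint_{B_{2R}(x_{0})}|{\bf F}-\overline{{\bf F}}_{B_{2R}(x_{0})}|^{2}\operatorname{d}\!x.
\end{equation*}
Squaring the triangle inequality via $(a+b)^{2}\leq 2a^{2}+2b^{2}$ and inserting this bound gives the desired $\big(\fint|D{\bf u}|\big)^{2}$ term and an extra $\big([{\bf A}]_{\operatorname{BMO}}^{2}(2R)\big)^{1-\frac{2}{q}}\fint|{\bf F}-\overline{{\bf F}}_{B_{2R}(x_{0})}|^{2}$ contribution, which merges with the pure $\fint|{\bf F}-\overline{{\bf F}}_{B_{2R}(x_{0})}|^{2}$ term coming from Lemma \ref{approximation1} to produce exactly the factor $\big(1+([{\bf A}]_{\operatorname{BMO}}^{2}(2R))^{1-\frac{2}{q}}\big)$ in \eqref{inequ-comparison}.

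There is no genuine obstacle here, since the two earlier comparison lemmas do all the analytic work; the only point demanding care is tracking that averages on $B_{\frac{3R}{2}}(x_{0})$ versus $B_{2R}(x_{0})$ differ by a dimensional constant, and that the replacement of $\fint|D{\bf v}|$ by $\fint|D{\bf u}|$ produces only the single mixed term claimed above. Collecting all constants, which only depend on $n,\lambda,\Lambda$, yields \eqref{inequ-comparison} and completes the proof.
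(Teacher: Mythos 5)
Your argument is correct and is precisely what the paper intends: the paper itself states only that Lemma~\ref{Lemma-comparison} follows ``by virtue of Lemma~\ref{approximation1}, Lemma~\ref{approximation2} with H\"older's inequality,'' and your write-up fills in exactly those steps — the triangle inequality through $({\bf v},\pi_{\bf v})$, the adjustment of the domain of integration from $B_{\frac{3R}{2}}$ to $B_{2R}$ by the factor $(4/3)^{n}$, and the replacement of $\fint_{B_{2R}}|D{\bf v}|$ by $\fint_{B_{2R}}|D{\bf u}|$ via H\"older and \eqref{inequ-comparisonlemma1}, which produces the claimed $\bigl(1+([{\bf A}]_{\operatorname{BMO}}^{2}(2R))^{1-\frac{2}{q}}\bigr)$ coefficient.
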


\section{Proof of main theorems.}\label{section3}

In this section, we first prove the pointwise gradient estimate in Theorem \ref{Th1}\,. Before proceeding further, we introduce the following Campanato-type decay estimate for the gradient of solution to \eqref{ComparisonSystem-2} (cf. \cite{DiKaSc, Dong})\,.
\begin{lemma}\label{lem-w-decayestimate}
Let ${w}$ be the weak solution to \eqref{ComparisonSystem-2}\,, then there exist positive constants $\alpha=\alpha(n,\lambda,\Lambda)$ and $C=C(n,\lambda,\Lambda)$ such that
\begin{equation}\label{w-decayestimate}
 \fint_{B_{\sigma R}}\left|D{\bf w}-\overline{D{\bf w}}_{B_{\sigma R}}\right|^{2}\operatorname{d}\!x \leq C\,\sigma^{2\alpha} \fint_{B_{R}}\left|D{\bf w}-\overline{D{\bf w}}_{B_{R}}\right|^{2}\operatorname{d}\!x
\end{equation}
for every $0<\sigma\leq1$ and every concentric ball $B_{\sigma R}\subset B_{R}\subset\Omega$.
\end{lemma}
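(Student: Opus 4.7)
The plan is to exploit the fact that $\overline{\mathbf{A}}_{B_{2R}(x_0)}$ is a constant matrix still satisfying \eqref{uniformellipticity} with the same constants $\lambda,\Lambda$, so that \eqref{ComparisonSystem-2} is a homogeneous constant-coefficient Stokes system. By the interior regularity theory for such systems (see \cite{DiKaSc, Dong}), the pair $(\mathbf{w},\pi_{\mathbf{w}})$ is smooth on every compactly contained subball, and $D\mathbf{w}$ enjoys an interior Hölder estimate with some exponent $\alpha = \alpha(n,\lambda,\Lambda) \in (0,1)$. Moreover, for every constant matrix $\mathbf{M} \in \mathbb{R}^{n \times n}$ with $\operatorname{tr}\mathbf{M}=0$ the shifted pair $(\mathbf{w}(x) - \mathbf{M}x,\pi_{\mathbf{w}})$ still solves \eqref{ComparisonSystem-2}, so the same interior estimate applies to $D\mathbf{w} - \mathbf{M}$. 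With these observations in hand I would carry out the classical Campanato dichotomy on $\sigma$.

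For $\sigma \in [1/2,1]$ the estimate is immediate: by \eqref{eqn-minimal2} and $|B_R|/|B_{\sigma R}| \leq 2^n$ one has
\begin{equation*}
\fint_{B_{\sigma R}} |D\mathbf{w} - \overline{D\mathbf{w}}_{B_{\sigma R}}|^2 \operatorname{d}\!x \leq 4 \fint_{B_{\sigma R}} |D\mathbf{w} - \overline{D\mathbf{w}}_{B_R}|^2 \operatorname{d}\!x \leq 2^{n+2} \fint_{B_R} |D\mathbf{w} - \overline{D\mathbf{w}}_{B_R}|^2 \operatorname{d}\!x,
\end{equation*}
which yields \eqref{w-decayestimate} after absorbing $2^{n+2}\cdot 2^{2\alpha}$ into the final constant $C$.

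For $\sigma \in (0,1/2)$ I would apply the interior Hölder estimate to the translated gradient $D\mathbf{w} - \overline{D\mathbf{w}}_{B_R(x_0)}$; note that $\operatorname{tr}\overline{D\mathbf{w}}_{B_R(x_0)} = \overline{\operatorname{div}\mathbf{w}}_{B_R(x_0)} = 0$, so the shift is admissible in the above sense. This produces, with $C = C(n,\lambda,\Lambda)$,
\begin{equation*}
[D\mathbf{w}]_{C^\alpha(B_{R/2}(x_0))}^2 \leq \frac{C}{R^{2\alpha}} \fint_{B_R(x_0)} |D\mathbf{w} - \overline{D\mathbf{w}}_{B_R(x_0)}|^2 \operatorname{d}\!x.
\end{equation*}
Combining the pointwise bound $|D\mathbf{w}(x) - D\mathbf{w}(y)| \leq [D\mathbf{w}]_{C^\alpha}|x - y|^\alpha$ on $B_{\sigma R}(x_0) \subset B_{R/2}(x_0)$ with \eqref{eqn-minimal2} then gives
\begin{equation*}
\fint_{B_{\sigma R}} |D\mathbf{w} - \overline{D\mathbf{w}}_{B_{\sigma R}}|^2 \operatorname{d}\!x \leq (2\sigma R)^{2\alpha} [D\mathbf{w}]_{C^\alpha(B_{R/2}(x_0))}^2 \leq C\sigma^{2\alpha} \fint_{B_R} |D\mathbf{w} - \overline{D\mathbf{w}}_{B_R}|^2 \operatorname{d}\!x,
\end{equation*}
which is exactly \eqref{w-decayestimate} in this range.

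The main obstacle is the interior $C^{1,\alpha}$-regularity input for constant-coefficient Stokes systems; the simultaneous presence of the pressure term prevents a direct reduction to scalar elliptic Schauder theory. This ingredient is supplied by \cite{DiKaSc, Dong} through Caccioppoli estimates for difference quotients of $(\mathbf{w},\pi_{\mathbf{w}})$ together with a blow-up/Liouville argument on the homogeneous Stokes system. Once it is taken as given, the decay \eqref{w-decayestimate} follows from the two-range dichotomy sketched above.
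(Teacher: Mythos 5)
Your argument is correct, and it is worth noting that the paper itself offers no proof of this lemma: it is quoted directly from \cite{DiKaSc, Dong} as a known Campanato-type estimate for the constant-coefficient Stokes system. What you supply is the standard bridge from the cited interior regularity to the stated decay: the dichotomy in $\sigma$, the trivial case $\sigma\in[\tfrac12,1]$ via \eqref{eqn-minimal2}, and for small $\sigma$ the interior $C^{1,\alpha}$ (in fact Lipschitz) estimate applied after subtracting the affine map $x\mapsto \overline{D{\bf w}}_{B_{R}}\,x$, where your observation that $\operatorname{tr}\overline{D{\bf w}}_{B_{R}}=\overline{\operatorname{div}{\bf w}}_{B_{R}}=0$ is exactly the point that makes the shifted pair again a divergence-free solution of the same constant-coefficient system. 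Since the only nontrivial ingredient, the interior H\"older/Lipschitz bound for $D{\bf w}$ in terms of its mean oscillation on the larger ball, is taken from the same references the paper cites (and is closely related to the Caccioppoli inequality \eqref{caccioppoli-Dw} the paper later borrows from \cite{Dong}), your proposal is in substance the argument underlying the quoted lemma rather than a genuinely different route; it is complete modulo that standard input.
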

We now turn our attention to the Campanato-type decay estimate of $D{\bf u}$, which is the main ingredient to carry on the proof of Theorem \ref{Th1}\,.
\begin{lemma}\label{lem-u-decayestimate}
Let $\beta\in(0,1)$ and $\left({\bf u},\pi\right)$ be a weak
solution pair to \eqref{model} with ${\bf F}\in
L^{2}(\Omega,\mathbb{R}^{n\times n})$. There exist the positive
constants $\sigma=\sigma(n,\lambda,\Lambda,\beta)\in\left(0,\frac{1}{2}\right]$, $\delta=\delta(n,\lambda,\Lambda)$ and $R_{1}=R_{1}(n,\lambda,\Lambda)$
such that if ${\bf A}$ is $(\delta,R)$\,-vanishing for some $R>0$, then
\begin{eqnarray}\label{u-decayestimate}
&& \fint_{B_{\sigma\!R}(x_{0})}\left|D{\bf
u}-\overline{D{\bf
u}}_{B_{\sigma\!R}(x_{0})}\right|+\left|{\pi}-\overline{\pi}_{B_{\sigma\!R}(x_{0})}\right|\operatorname{d}\!x\nonumber\\
&\leq &\beta\fint_{B_{R}(x_{0})}\left|D{\bf
u}-\overline{D {\bf
u}}_{B_{R}(x_{0})}\right|\operatorname{d}\!x
 + C_{\beta}\left(\fint_{B_{R}(x_{0})}\left|{\bf
F}-\overline{\bf
F}_{B_{R}(x_{0})}\right|^{2}\operatorname{d}\!x\right)^{\frac{1}{2}}\nonumber\\
&&+C_{\beta}\left([{\bf A}]_{\operatorname{ BMO}}^{2}(R)\right)^{\frac{1}{2}-\frac{1}{q}}\fint_{B_{R}(x_{0})}\left|D{\bf
u}\right|\operatorname{d}\!x
\end{eqnarray}
for any $B_{R}(x_{0})\subset\Omega$ with $0<R\leq R_{1}$, where $C_{\beta}=C_{\beta}(n,\lambda,\Lambda,\beta)>0$\,.
\end{lemma}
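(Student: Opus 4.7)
The approach is the standard two-step comparison underlying Campanato-type decay: I compare $({\bf u},\pi)$ with the frozen-coefficient, homogeneous pair $({\bf w},\pi_{\bf w})$ from \eqref{ComparisonSystem-2} via Lemma \ref{Lemma-comparison}, then harvest the interior decay of $D{\bf w}$ from Lemma \ref{lem-w-decayestimate} (together with its pressure counterpart). Applying Lemma \ref{Lemma-comparison} with its $R$ replaced by $R/2$ relocates the comparison to $B_{3R/4}(x_0)$ with all data living on $B_R(x_0)$; provided $\delta$ is small enough the prefactor multiplying the ${\bf F}$-term there is uniformly bounded, so the key consequence is
\[
\Big(\fint_{B_{3R/4}(x_0)}|D{\bf u}-D{\bf w}|^2+|\pi-\pi_{\bf w}|^2\,dx\Big)^{1/2}\le C\bigl([{\bf A}]_{\operatorname{BMO}}^{2}(R)\bigr)^{\frac{1}{2}-\frac{1}{q}}\fint_{B_R(x_0)}|D{\bf u}|\,dx+C\Big(\fint_{B_R(x_0)}|{\bf F}-\overline{{\bf F}}_{B_R(x_0)}|^{2}\,dx\Big)^{1/2}.
\]

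Since $\sigma\le 1/2$ one has $B_{\sigma R}(x_0)\subset B_{3R/4}(x_0)\subset B_R(x_0)$, so by the minimizer inequality \eqref{eqn-minimal2} and the triangle inequality,
\[
\fint_{B_{\sigma R}(x_0)}|D{\bf u}-\overline{D{\bf u}}_{B_{\sigma R}(x_0)}|\,dx\le 2\fint_{B_{\sigma R}(x_0)}|D{\bf u}-D{\bf w}|\,dx+2\fint_{B_{\sigma R}(x_0)}|D{\bf w}-\overline{D{\bf w}}_{B_{\sigma R}(x_0)}|\,dx,
\]
and the identical decomposition for $\pi$. H\"older combined with the enlargement $B_{\sigma R}\subset B_{3R/4}$ shows the first summand is bounded by $C\sigma^{-n/2}(\fint_{B_{3R/4}}|D{\bf u}-D{\bf w}|^{2}\,dx)^{1/2}$, and hence by the comparison estimate above it contributes a $C_{\sigma}$-multiple of the two admissible error terms already present in \eqref{u-decayestimate}.

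The second summand picks up the Campanato decay: Jensen and Lemma \ref{lem-w-decayestimate} supply a prefactor $C\sigma^{\alpha}$ in front of $(\fint_{B_{3R/4}}|D{\bf w}-\overline{D{\bf w}}_{B_{3R/4}}|^{2}\,dx)^{1/2}$. Applying \eqref{eqn-minimal2} once more to replace $\overline{D{\bf w}}_{B_{3R/4}}$ by the specific constant $\overline{D{\bf u}}_{B_R}$ and splitting $D{\bf w}=D{\bf u}+(D{\bf w}-D{\bf u})$, the $(D{\bf w}-D{\bf u})$-piece is reabsorbed into the comparison error, while the residual $(\fint_{B_{3R/4}}|D{\bf u}-\overline{D{\bf u}}_{B_R}|^{2}\,dx)^{1/2}$ is reduced to the $L^{1}$-oscillation $\fint_{B_R}|D{\bf u}-\overline{D{\bf u}}_{B_R}|\,dx$ by a reverse-H\"older argument applied to the shifted solution $\tilde{\bf u}:={\bf u}-\overline{D{\bf u}}_{B_R}\,x$. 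This $\tilde{\bf u}$ is divergence-free (since $\overline{D{\bf u}}_{B_R}$ is trace-free) and solves a Stokes system with modified right-hand side ${\bf F}-({\bf A}-\overline{\bf A}_{B_R})\overline{D{\bf u}}_{B_R}$; a Meyers-type higher integrability for $D\tilde{\bf u}$ combined with the self-improvement of Lemma \ref{reverse-holder} then yields the desired reduction up to supplementary $BMO$- and $F$-errors that fit the target template. The pressure oscillation is controlled verbatim by the same template, invoking the Campanato decay for $\pi_{\bf w}$ parallel to \eqref{w-decayestimate}, which is available from the constant-coefficient Stokes regularity theory.

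Collecting everything, the Campanato piece becomes $C\sigma^{\alpha}\fint_{B_R}|D{\bf u}-\overline{D{\bf u}}_{B_R}|\,dx$ plus absorbed errors; choosing $\sigma=\sigma(n,\lambda,\Lambda,\beta)\in(0,1/2]$ so that $C\sigma^{\alpha}\le\beta$ delivers the advertised coefficient $\beta$, while the remaining ($\sigma$-dependent) constants are swept into $C_{\beta}$ and the smallness of $\delta$ together with the choice of $R_{1}$ ensures every auxiliary prefactor is universal. The main technical obstacle is precisely the $L^{2}\to L^{1}$ oscillation reduction in the previous paragraph: for the Stokes system, reverse H\"older for the shifted gradient $D\tilde{\bf u}$ requires a Caccioppoli inequality in which the pressure is handled with care, and the new source $({\bf A}-\overline{\bf A}_{B_R})\overline{D{\bf u}}_{B_R}$ must be absorbed using smallness of $[{\bf A}]_{\operatorname{BMO}}(R)$ together with the trivial bound $|\overline{D{\bf u}}_{B_R}|\le\fint_{B_R}|D{\bf u}|\,dx$—an estimate that matches the third term on the right-hand side of \eqref{u-decayestimate} and thus dovetails with the target. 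A secondary, less delicate, point is establishing the Campanato decay for $\pi_{\bf w}$ not directly stated in Lemma \ref{lem-w-decayestimate}.
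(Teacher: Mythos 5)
Your overall pipeline (compare $({\bf u},\pi)$ to $({\bf v},\pi_{\bf v})$ and then to the frozen-coefficient homogeneous pair $({\bf w},\pi_{\bf w})$, extract the $\sigma^{\alpha}$ decay from Lemma \ref{lem-w-decayestimate}, and absorb $\delta$- and $\sigma$-dependent errors into $C_\beta$) matches the paper's. The gap is in where you perform the $L^2$-to-$L^1$ oscillation reduction. You apply it to $D\tilde{\bf u}$ with $\tilde{\bf u}={\bf u}-\overline{D{\bf u}}_{B_R}\,x$, which solves an inhomogeneous, variable-coefficient Stokes system with source ${\bf F}-({\bf A}-\overline{\bf A}_{B_R})\overline{D{\bf u}}_{B_R}$. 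The paper instead applies it to $D{\bf w}$. This ordering is not cosmetic: because $\bf w$ solves the constant-coefficient, source-free system \eqref{ComparisonSystem-2}, the Caccioppoli inequality \eqref{caccioppoli-Dw} for $D^2{\bf w}$ together with Sobolev embedding gives a \emph{clean} reverse H\"older inequality for $D{\bf w}-\overline{D{\bf w}}$, with no lower-order error terms, which is precisely the hypothesis Lemma \ref{reverse-holder} needs to self-improve down to $L^1$. For your $\tilde{\bf u}$, any reverse H\"older you could hope to establish necessarily carries additive errors from ${\bf F}$ and from the coefficient oscillation, so Lemma \ref{reverse-holder} as stated does not apply, and you would need to re-prove a self-improving reverse H\"older with error terms (and a Caccioppoli inequality for $D\tilde{\bf u}$ in the Stokes setting where the pressure is dealt with by a Bogovskii construction). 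You correctly identify this as "the main technical obstacle," but supply no argument for it; the paper simply avoids it by doing the $L^2\to L^1$ step on $D{\bf w}$ \emph{before} passing back to $D{\bf u}$ via Lemma \ref{Lemma-comparison} and \eqref{eqn-minimal2}.

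A secondary point: you invoke a Campanato decay for $\pi_{\bf w}$ "parallel to \eqref{w-decayestimate}" as if it were given, but Lemma \ref{lem-w-decayestimate} is stated only for $D{\bf w}$. The paper sidesteps the need for any separate pressure decay by testing with a Bogovskii field solving \eqref{auxiliaryprob3}, which controls $\fint_{B_{\sigma R}}|\pi_{\bf w}-\overline{\pi_{\bf w}}|^2$ directly by $\fint_{B_{\sigma R}}|D{\bf w}-\overline{D{\bf w}}|^2$ on the same ball; that oscillation estimate \eqref{lem-u-decay-inequ4} is the ingredient you would need to prove rather than assume.
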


\begin{proof}
In order to prove this technical lemma, we first utilize H\"{o}lder's inequality and \eqref{eqn-minimal2}, one obtains that
\begin{eqnarray}\label{lem-u-decay-inequ1}
&& \left(\fint_{B_{\sigma\!R}(x_{0})}\left|D{\bf
u}-\overline{D{\bf
u}}_{B_{\sigma\!R}(x_{0})}\right|+\left|{\pi}-\overline{\pi}_{B_{\sigma\!R}(x_{0})}\right|\operatorname{d}\!x\right)^{2}\nonumber\\
&\leq&2\fint_{B_{\sigma\!R}(x_{0})}\left|D{\bf
u}-\overline{D{\bf
u}}_{B_{\sigma\!R}(x_{0})}\right|^{2}\operatorname{d}\!x+2\fint_{B_{\sigma\!R}(x_{0})}\left|{\pi}-\overline{\pi}_{B_{\sigma\!R}(x_{0})}\right|
^{2}\operatorname{d}\!x,\nonumber\\
&\leq&4\fint_{B_{\sigma\!R}(x_{0})}\left|D{\bf
u}-D{\bf
w}\right|^{2}+\left|D{\bf
w}-\overline{D{\bf
w}}_{B_{\sigma\!R}(x_{0})}\right|^{2}\operatorname{d}\!x+4\fint_{B_{\sigma\!R}(x_{0})}\left|{\pi}-\pi_{\bf w}\right|^{2}+
\left|\pi_{\bf w}-\overline{\pi_{\bf w}}_{B_{\sigma\!R}(x_{0})}\right|^{2}\operatorname{d}\!x,\nonumber\\
&\leq&C(n)\sigma^{-n}\fint_{B_{\frac{3R}{2}}(x_{0})}\left|D{\bf
u}-D{\bf
w}\right|^{2}+\left|{\pi}-\pi_{\bf w}\right|^{2}\operatorname{d}\!x\nonumber\\
&&+4\fint_{B_{\sigma\!R}(x_{0})}\left|D{\bf
w}-\overline{D{\bf
w}}_{B_{\sigma\!R}(x_{0})}\right|^{2}+
\left|\pi_{\bf w}-\overline{\pi_{\bf w}}_{B_{\sigma\!R}(x_{0})}\right|^{2}\operatorname{d}\!x
\end{eqnarray}
for any $0<\sigma\leq1$. To estimate the last term on the right side
of \eqref{lem-u-decay-inequ1}\,, let $\psi\in W_{0}^{1,2}\left(B_{\sigma\!R}(x_{0}),\mathbb{R}^{n}\right)$ be a test function of \eqref{ComparisonSystem-2}, i.e.,
\begin{equation}\label{lem-u-decay-equ2}
  \fint_{B_{\sigma\!R}(x_{0})}\left(\pi_{\bf w}-\overline{\pi_{\bf w}}_{B_{\sigma\!R}(x_{0})}\right)\operatorname{div}\psi \operatorname{d}\!x= \fint_{B_{\sigma\!R}(x_{0})}\big\langle\overline{{\bf A}}_{B_{2R}(x_{0})}\left(D{\bf w}-\overline{D{\bf
w}}_{B_{\sigma\!R}(x_{0})}\right),\nabla\psi\big\rangle \operatorname{d}\!x.
\end{equation}
More precisely, selecting the above $\psi$ be a solution of the auxiliary problem
\begin{equation}\label{auxiliaryprob3}
\left\{\begin{array}{r@{\ \ }c@{\ \ }ll}
\operatorname{div}{\bf \psi} & =&\pi_{\bf w}-\overline{\pi_{\bf w}}_{B_{\sigma\!R}(x_{0})} & \mbox{in}\ \ B_{\sigma\!R}(x_{0})\,, \\[0.05cm]
{\bf \psi} &=&  0 & \mbox{on}\ \ \partial B_{\sigma\!R}(x_{0})\,,
\end{array}\right.
\end{equation}
where $\pi_{\bf w}-\overline{\pi_{\bf w}}_{B_{\sigma\!R}(x_{0})}\in L^{2}(B_{\sigma\!R}(x_{0}))$ and $\int_{B_{\sigma\!R}(x_{0})}\pi_{\bf w}-\overline{\pi_{\bf w}}_{B_{\sigma\!R}(x_{0})}\operatorname{d}\!x=0$. Then Lemma \ref{existence-Johndomain} infers there exists a
 solution to auxiliary problem
\eqref{auxiliaryprob3} such that
\begin{equation}\label{lem-u-decay-inequ3}
  \left\|\bf\nabla\psi\right\|_{L^{2}\left(B_{\sigma\!R}(x_{0})\right)}\leq c_{3}\left\|\pi_{\bf w}-\overline{\pi_{\bf w}}_{B_{\sigma\!R}(x_{0})}\right\|_{L^{2}\left(B_{\sigma\!R}(x_{0})\right)},
\end{equation}
where the positive constant $c_{3}$ depends only on $n$. Substituting such $\psi$ into equality
\eqref{lem-u-decay-equ2} and combining Young's inequality with
\eqref{lem-u-decay-inequ3} and the boundedness of ${\bf A}$ yield that
\begin{eqnarray*}
   && \fint_{B_{\sigma\!R}(x_{0})}\left|\pi_{\bf w}-\overline{\pi_{\bf w}}_{B_{\sigma\!R}(x_{0})}\right|^{2} \operatorname{d}\!x\nonumber\\
  &\leq& C(\varepsilon,n,\Lambda)\fint_{B_{\sigma\!R}(x_{0})}\left|D{\bf
w}-\overline{D{\bf w}}_{B_{\sigma\!R}(x_{0})}\right|^{2}\operatorname{d}\!x+\varepsilon \fint_{B_{\sigma\!R}(x_{0})}\left|\nabla\psi\right|^{2}\operatorname{d}\!x,\\
   &\leq& C(\varepsilon,n,\Lambda)\fint_{B_{\sigma\!R}(x_{0})}\left|D{\bf
w}-\overline{D{\bf w}}_{B_{\sigma\!R}(x_{0})}\right|^{2}\operatorname{d}\!x+c_{3}^{2}\varepsilon \fint_{B_{\sigma\!R}(x_{0})}\left|\pi_{\bf w}-\overline{\pi_{\bf w}}_{B_{\sigma\!R}(x_{0})}\right|^{2} \operatorname{d}\!x.
\end{eqnarray*}
Setting the positive constant $\varepsilon$ sufficiently small
such that $c_{3}^{2}\varepsilon\leq\frac{1}{2}$, we deduce that
\begin{equation}\label{lem-u-decay-inequ4}
   \fint_{B_{\sigma\!R}(x_{0})}\left|\pi_{\bf w}-\overline{\pi_{\bf w}}_{B_{\sigma\!R}(x_{0})}\right|^{2} \operatorname{d}\!x\leq C(n,\Lambda)\fint_{B_{\sigma\!R}(x_{0})}\left|D{\bf
w}-\overline{D{\bf w}}_{B_{\sigma\!R}(x_{0})}\right|^{2}\operatorname{d}\!x.
\end{equation}
Inserting \eqref{lem-u-decay-inequ4} into \eqref{lem-u-decay-inequ1} and utilizing Lemma \ref{lem-w-decayestimate}\,, we derive
\begin{eqnarray}\label{lem-u-decay-inequ5}
 && \left(\fint_{B_{\sigma\!R}(x_{0})}\left|D{\bf
u}-\overline{D{\bf
u}}_{B_{\sigma\!R}(x_{0})}\right|+\left|{\pi}-\overline{\pi}_{B_{\sigma\!R}(x_{0})}\right|\operatorname{d}\!x\right)^{2}\nonumber\\
 &\leq&C\sigma^{-n}\fint_{B_{\frac{3R}{2}}(x_{0})}\left|D{\bf
u}-D{\bf
w}\right|^{2}+\left|{\pi}-\pi_{\bf w}\right|^{2}\operatorname{d}\!x+C\fint_{B_{\sigma\!R}(x_{0})}\left|D{\bf
w}-\overline{D{\bf
w}}_{B_{\sigma\!R}(x_{0})}\right|^{2}\operatorname{d}\!x,\nonumber\\
&\leq&C\sigma^{-n}\fint_{B_{\frac{3R}{2}}(x_{0})}\left|D{\bf
u}-D{\bf
w}\right|^{2}+\left|{\pi}-\pi_{\bf w}\right|^{2}\operatorname{d}\!x+C\sigma^{2\alpha}\fint_{B_{R}(x_{0})}\left|D{\bf
w}-\overline{D{\bf
w}}_{B_{R}(x_{0})}\right|^{2}\operatorname{d}\!x.
\end{eqnarray}

Next, estimating the last term on the right side of \eqref{lem-u-decay-inequ5}, we apply the following Caccioppoli inequality
\begin{equation}\label{caccioppoli-Dw}
  \fint_{B_{R}(x_{0})}\left|D^{2}{\bf w}\right|^{2}\operatorname{d}\!x\leq\frac{C(\lambda,\Lambda)}{R^{2}}\fint_{B_{\frac{3R}{2}}(x_{0})}\left|D{\bf
w}-\overline{D{\bf
w}}_{B_{\frac{3R}{2}}(x_{0})}\right|^{2}dx,
\end{equation}
which can refer to \cite{Dong}. By virtue of Sobolev embedding Theorem and \eqref{caccioppoli-Dw}, we have
\begin{equation*}
  \left(\fint_{B_{R}(x_{0})}\left|D{\bf
w}-\overline{D{\bf
w}}_{B_{R}(x_{0})}\right|^{s}\operatorname{d}\!x\right)^{\frac{1}{s}}\leq C(n,\lambda,\Lambda)\left(\fint_{B_{\frac{3R}{2}}(x_{0})}\left|D{\bf
w}-\overline{D{\bf
w}}_{B_{\frac{3R}{2}}(x_{0})}\right|^{2}\operatorname{d}\!x\right)^{\frac{1}{2}},
\end{equation*}
where
\begin{equation*}
s=
  \left\{\begin{array}{r@{\ \ }c@{\ \ }ll}
\frac{2n}{n-2},  \ & n&>2, \\
p,  \ & n&=2,
\end{array}\right.
\end{equation*}
for any $p\in[2,+\infty)$. Thus, a combination of Lemma \ref{reverse-holder} and H\"{o}lder's inequality yields that
\begin{equation}\label{lem-u-decay-inequ6}
  \left(\fint_{B_{R}(x_{0})}\left|D{\bf
w}-\overline{D{\bf
w}}_{B_{R}(x_{0})}\right|^{2}\operatorname{d}\!x\right)^{\frac{1}{2}}\leq C(n,\lambda,\Lambda)\fint_{B_{\frac{3R}{2}}(x_{0})}\left|D{\bf
w}-\overline{D{\bf
w}}_{B_{\frac{3R}{2}}(x_{0})}\right|\operatorname{d}\!x.
\end{equation}
Inserting \eqref{lem-u-decay-inequ6} into \eqref{lem-u-decay-inequ5}, then it follows from H\"{o}lder's inequality, \eqref{eqn-minimal2} and Lemma \ref{Lemma-comparison} that
\begin{eqnarray}\label{lem-u-decay-inequ7}
 && \left(\fint_{B_{\sigma\!R}(x_{0})}\left|D{\bf
u}-\overline{D{\bf
u}}_{B_{\sigma\!R}(x_{0})}\right|+\left|{\pi}-\overline{\pi}_{B_{\sigma\!R}(x_{0})}\right|\operatorname{d}\!x\right)^{2}\nonumber\\
&\leq&C\sigma^{-n}\fint_{B_{\frac{3R}{2}}(x_{0})}\left|D{\bf
u}-D{\bf
w}\right|^{2}+\left|{\pi}-\pi_{\bf w}\right|^{2}\operatorname{d}\!x+C\sigma^{2\alpha}\left(\fint_{B_{\frac{3R}{2}}(x_{0})}\left|D{\bf
w}-\overline{D{\bf
w}}_{B_{\frac{3R}{2}}(x_{0})}\right|\operatorname{d}\!x\right)^{2},\nonumber\\
  &\leq&C\left(\sigma^{-n}+\sigma^{2\alpha}\right)\fint_{B_{\frac{3R}{2}}(x_{0})}\left|D{\bf
u}-D{\bf
w}\right|^{2}+\left|{\pi}-\pi_{\bf w}\right|^{2}\operatorname{d}\!x\nonumber\\
&&+C\sigma^{2\alpha}\left(\fint_{B_{\frac{3R}{2}}(x_{0})}\left|D{\bf
u}-\overline{D{\bf
u}}_{B_{\frac{3R}{2}}(x_{0})}\right|\operatorname{d}\!x\right)^{2},\nonumber\\
   &\leq& C\sigma^{2\alpha}\left(\fint_{B_{2R}(x_{0})}\left|D{\bf
u}-\overline{D{\bf
u}}_{B_{2R}(x_{0})}\right|\operatorname{d}\!x\right)^{2}+C_{\sigma}\left([{\bf A}]_{\operatorname{ BMO}}^{2}(2R)\right)^{1-\frac{2}{q}}\left(\fint_{B_{2R}(x_{0})}\left|D{\bf u}\right|\operatorname{d}\!x\right)^{2}\nonumber\\
&&+C_{\sigma}\left(1+\left([{\bf A}]_{\operatorname{ BMO}}^{2}(2R)\right)^{1-\frac{2}{q}}\right)\fint_{B_{2R}(x_{0})}\!\left|{\bf F}-\overline{{\bf F}}_{B_{2R}(x_{0})}\right|^{2}\operatorname{d}\!x\,,
\end{eqnarray}
for any $0<\sigma\leq1$, where the positive constants $C=C(n,\lambda,\Lambda)$ and $C_{\sigma}=C(\sigma,n,\lambda,\Lambda)$.
Obviously, \eqref{lem-u-decay-inequ7} is equivalent to
\begin{eqnarray*}
&&\fint_{B_{\sigma\!R}(x_{0})}\left|D{\bf
u}-\overline{D{\bf
u}}_{B_{\sigma\!R}(x_{0})}\right|+\left|{\pi}-\overline{\pi}_{B_{\sigma\!R}(x_{0})}\right|\operatorname{d}\!x \\
 &\leq& C\sigma^{\alpha}\fint_{B_{R}(x_{0})}\left|D{\bf
u}-\overline{D{\bf
u}}_{B_{R}(x_{0})}\right|\operatorname{d}\!x+C_{\sigma} \left([{\bf A}]_{\operatorname{ BMO}}^{2}(R)\right)^{\frac{1}{2}-\frac{1}{q}}\fint_{B_{R}(x_{0})}\left|D{\bf u}\right|\operatorname{d}\!x\nonumber\\
&&+C_{\sigma}\left(1+\left([{\bf A}]_{\operatorname{ BMO}}^{2}(R)\right)^{\frac{1}{2}-\frac{1}{q}}\right)\left(\fint_{B_{R}(x_{0})}\!\left|{\bf F}-\overline{{\bf F}}_{B_{R}(x_{0})}\right|^{2}\operatorname{d}\!x\right)^{\frac{1}{2}}
\end{eqnarray*}
for any $0<\sigma\leq\frac{1}{2}$. Due to the $(\delta,R)$\,-vanishing property of ${\bf A}$, the non-decreasing function $[{\bf A}]_{\operatorname{ BMO}}^{2}(\cdot)$ and $q=q(n,\lambda,\Lambda)>2$, then there exist the positive constants $\delta=\delta(n,\lambda,\Lambda)$ and $R_{1}=R_{1}(n,\lambda,\Lambda)$ such that
$$\left([{\bf A}]_{\operatorname{ BMO}}^{2}(R)\right)^{\frac{1}{2}-\frac{1}{q}}\leq\delta^{1-\frac{2}{q}}\leq1$$
for any $R\leq R_{1}$. Finally, selecting $\sigma$ small enough such that $C\sigma^{\alpha}\leq\beta$ for any $\beta\in(0,1)$, we can conclude the desired decay estimate \eqref{u-decayestimate} for any $B_{R}(x_{0})\subset\Omega$ with $0<R\leq R_{1}$.
\end{proof}

Now we give the proof of Theorem \ref{Th1}\,.

\begin{proof}[Proof of Theorem \ref{Th1}] Without loss of generality, we may assume that
\begin{equation*}
  \int_{0}^{2R}\left( \fint_{B_{\varrho}(x_{0})}\left(\frac{|{\bf F}-\overline{{\bf F}}_{B_{\varrho}(x_{0})}|}{\varrho}\right)^{2}\operatorname{d}\!x\right)^{\frac{1}{2}}\operatorname{d}\!\varrho<\infty,
\end{equation*}
otherwise \eqref{gradient-estimate} is obviously established. We first take $R_{0}\leq R_{1}$, where the radius $R_{1}$ has been determined in Lemma \ref{lem-u-decayestimate}\,. Meanwhile, fixing $\beta=\frac{1}{4}$ and selecting the corresponding $\sigma=\sigma(n,\lambda,\Lambda)\in\left(0,\frac{1}{2}\right]$, which are given by Lemma \ref{lem-u-decayestimate}\,. After a direct calculation, we have
\begin{eqnarray}\label{th1-inequ1}
  \left|\fint_{B_{\sigma^{k}R}(x_{0})}D{\bf u}\operatorname{d}\!x-\fint_{B_{R}(x_{0})}D{\bf u}\operatorname{d}\!x \right| &=& \left|\sum_{i=0}^{k-1}\left(\fint_{B_{\sigma^{i+1}R}(x_{0})}D{\bf u}\operatorname{d}\!x-\fint_{B_{\sigma^{i}R}(x_{0})}D{\bf u}\operatorname{d}\!x\right) \right|,\nonumber \\
  &\leq& \sigma^{-n}\sum_{i=0}^{k-1}\left|\fint_{B_{\sigma^{i}R}(x_{0})}D{\bf u}-\overline{D{\bf u}}_{B_{\sigma^{i}R}(x_{0})}\operatorname{d}\!x \right|
\end{eqnarray}
for any $k\in\mathbb{N}$. Similarly,
\begin{equation}\label{th1-inequ2}
  \left|\fint_{B_{\sigma^{k}R}(x_{0})}\pi\operatorname{d}\!x-\fint_{B_{R}(x_{0})}\pi\operatorname{d}\!x \right| \leq \sigma^{-n}\sum_{i=0}^{k-1}\left|\fint_{B_{\sigma^{i}R}(x_{0})}\pi-\overline{\pi}_{B_{\sigma^{i}R}(x_{0})}\operatorname{d}\!x \right|.
\end{equation}
In terms of \eqref{u-decayestimate}, we deduce
\begin{eqnarray}\label{th1-inequ3}
  && \sum_{i=1}^{k}\left(\fint_{B_{\sigma^{i}\!R}(x_{0})}\left|D{\bf
u}-\overline{D{\bf
u}}_{B_{\sigma^{i}\!R}(x_{0})}\right|+\left|{\pi}-\overline{\pi}_{B_{\sigma^{i}\!R}(x_{0})}\right|\operatorname{d}\!x\right)\nonumber\\
&\leq &\frac{1}{4}\sum_{i=0}^{k-1}\fint_{B_{\sigma^{i}R}(x_{0})}\left|D{\bf
u}-\overline{D {\bf
u}}_{B_{\sigma^{i}R}(x_{0})}\right|\operatorname{d}\!x
 + C\sum_{i=0}^{k-1}\left(\fint_{B_{\sigma^{i}R}(x_{0})}\left|{\bf
F}-\overline{\bf
F}_{B_{\sigma^{i}R}(x_{0})}\right|^{2}\operatorname{d}\!x\right)^{\frac{1}{2}}\nonumber\\
&&+C\sum_{i=0}^{k-1}\left([{\bf A}]_{\operatorname{ BMO}}^{2}(\sigma^{i}R)\right)^{\frac{1}{2}-\frac{1}{q}}\fint_{B_{\sigma^{i}R}(x_{0})}\left|D{\bf
u}\right|\operatorname{d}\!x,\nonumber\\
&\leq &\frac{1}{4}\sum_{i=0}^{k-1}\fint_{B_{\sigma^{i}R}(x_{0})}\left|D{\bf
u}-\overline{D {\bf
u}}_{B_{\sigma^{i}R}(x_{0})}\right|\operatorname{d}\!x+C\sum_{i=0}^{k-1}\left([{\bf A}]_{\operatorname{ BMO}}^{2}(\sigma^{i}R)\right)^{\frac{1}{2}-\frac{1}{q}}\fint_{B_{\sigma^{i}R}(x_{0})}\left|D{\bf
u}-\overline{D {\bf
u}}_{B_{\sigma^{i}R}(x_{0})}\right|\operatorname{d}\!x\nonumber\\
&& + C\sum_{i=0}^{k-1}\left(\fint_{B_{\sigma^{i}R}(x_{0})}\left|{\bf
F}-\overline{\bf
F}_{B_{\sigma^{i}R}(x_{0})}\right|^{2}\operatorname{d}\!x\right)^{\frac{1}{2}}+C\sum_{i=0}^{k-1}\left([{\bf A}]_{\operatorname{ BMO}}^{2}(\sigma^{i}R)\right)^{\frac{1}{2}-\frac{1}{q}}\left|\fint_{B_{\sigma^{i}R}(x_{0})}D{\bf
u}\operatorname{d}\!x\right|,
\end{eqnarray}
where $C=C(n,\lambda,\Lambda)$. It follows from \eqref{eqn-minimal2} to estimate the integral term involving $\bf F$ of \eqref{th1-inequ3} that
\begin{eqnarray*}
  && \sum_{i=0}^{k-1}\left(\fint_{B_{\sigma^{i}R}(x_{0})}\left|{\bf
F}-\overline{\bf
F}_{B_{\sigma^{i}R}(x_{0})}\right|^{2}\operatorname{d}\!x\right)^{\frac{1}{2}}\nonumber \\
   &\leq&  \left(\fint_{B_{R}(x_{0})}\left|{\bf
F}-\overline{\bf
F}_{B_{R}(x_{0})}\right|^{2}\operatorname{d}\!x\right)^{\frac{1}{2}}+ \sum_{i=1}^{\infty}\left(\fint_{B_{\sigma^{i}R}(x_{0})}\left|{\bf
F}-\overline{\bf
F}_{B_{\sigma^{i}R}(x_{0})}\right|^{2}\operatorname{d}\!x\right)^{\frac{1}{2}}, \nonumber\\
   &=&\frac{1}{\ln2}\int_{R}^{2R}\left(\fint_{B_{R}(x_{0})}\left|{\bf
F}-\overline{\bf
F}_{B_{R}(x_{0})}\right|^{2}\operatorname{d}\!x\right)^{\frac{1}{2}}\frac{\operatorname{d}\!\varrho}{\varrho}\nonumber\\
&&+\frac{1}{\ln\frac{1}{\sigma}}
\sum_{i=1}^{\infty}\int_{\sigma^{i}R}^{\sigma^{i-1}R}\left(\fint_{B_{\sigma^{i}R}(x_{0})}\left|{\bf
F}-\overline{\bf
F}_{B_{\sigma^{i}R}(x_{0})}\right|^{2}\operatorname{d}\!x\right)^{\frac{1}{2}} \frac{\operatorname{d}\!\varrho}{\varrho},\nonumber\\
  &\leq& \frac{2^{\frac{n}{2}}}{\ln2}\int_{R}^{2R}\left(\fint_{B_{\varrho}(x_{0})}\left|{\bf
F}-\overline{\bf
F}_{B_{\varrho}(x_{0})}\right|^{2}\operatorname{d}\!x\right)^{\frac{1}{2}}\frac{\operatorname{d}\!\varrho}{\varrho}\nonumber\\
&&+\frac{1}{\sigma^{\frac{n}{2}}\ln\frac{1}{\sigma}}
\sum_{i=1}^{\infty}\int_{\sigma^{i}R}^{\sigma^{i-1}R}\left(\fint_{B_{\varrho}(x_{0})}\left|{\bf
F}-\overline{\bf
F}_{B_{\varrho}(x_{0})}\right|^{2}\operatorname{d}\!x\right)^{\frac{1}{2}} \frac{\operatorname{d}\!\varrho}{\varrho},\nonumber\\
&\leq&C(n,\lambda,\Lambda)\int_{0}^{2R}\left( \fint_{B_{\varrho}(x_{0})}\left(\frac{|{\bf F}-\overline{{\bf F}}_{B_{\varrho}(x_{0})}|}{\varrho}\right)^{2}\operatorname{d}\!x\right)^{\frac{1}{2}}\operatorname{d}\!\varrho.
\end{eqnarray*}
Note that $[{\bf A}]^{2}_{\operatorname{ BMO}}(R)$ is a non-decreasing function with respect to $R$, then we derive
\begin{eqnarray}\label{th1-inequ4}
  && \sum_{i=1}^{k}\left(\fint_{B_{\sigma^{i}\!R}(x_{0})}\left|D{\bf
u}-\overline{D{\bf
u}}_{B_{\sigma^{i}\!R}(x_{0})}\right|+\left|{\pi}-\overline{\pi}_{B_{\sigma^{i}\!R}(x_{0})}\right|\operatorname{d}\!x\right)\nonumber\\
&\leq &\left(\frac{1}{4}+C_{1}\left([{\bf A}]_{\operatorname{ BMO}}^{2}(R)\right)^{\frac{1}{2}-\frac{1}{q}}\right)\sum_{i=0}^{k-1}\fint_{B_{\sigma^{i}R}(x_{0})}\left|D{\bf
u}-\overline{D {\bf
u}}_{B_{\sigma^{i}R}(x_{0})}\right|\operatorname{d}\!x\nonumber\\
&& + C\int_{0}^{2R}\left( \fint_{B_{\varrho}(x_{0})}\left(\frac{|{\bf F}-\overline{{\bf F}}_{B_{\varrho}(x_{0})}|}{\varrho}\right)^{2}\operatorname{d}\!x\right)^{\frac{1}{2}}\operatorname{d}\!\varrho\nonumber\\
&&+C\sum_{i=0}^{k-1}\left([{\bf A}]_{\operatorname{ BMO}}^{2}(\sigma^{i}R)\right)^{\frac{1}{2}-\frac{1}{q}}\left|\fint_{B_{\sigma^{i}R}(x_{0})}D{\bf
u}\operatorname{d}\!x\right|.
\end{eqnarray}
Now we choose the radius $R_{0}$ and $\delta>0$ such that
\begin{equation*}
  C_{1}\left([{\bf A}]_{\operatorname{ BMO}}^{2}(R)\right)^{\frac{1}{2}-\frac{1}{q}}\leq C_{1}\delta^{1-\frac{2}{q}}=\frac{1}{4}
\end{equation*}
for any $R\leq R_{0}$. Then the first term on the right side of \eqref{th1-inequ4} can be absorbed by the left side that
\begin{eqnarray}\label{th1-inequ5}
  && \sum_{i=1}^{k}\left(\fint_{B_{\sigma^{i}\!R}(x_{0})}\left|D{\bf
u}-\overline{D{\bf
u}}_{B_{\sigma^{i}\!R}(x_{0})}\right|+\left|{\pi}-\overline{\pi}_{B_{\sigma^{i}\!R}(x_{0})}\right|\operatorname{d}\!x\right)\nonumber\\
&\leq &\fint_{B_{R}(x_{0})}\left|D{\bf
u}-\overline{D {\bf
u}}_{B_{R}(x_{0})}\right|\operatorname{d}\!x + C\int_{0}^{2R}\left( \fint_{B_{\varrho}(x_{0})}\left(\frac{|{\bf F}-\overline{{\bf F}}_{B_{\varrho}(x_{0})}|}{\varrho}\right)^{2}\operatorname{d}\!x\right)^{\frac{1}{2}}\operatorname{d}\!\varrho\nonumber\\
&&+C\sum_{i=0}^{k-1}\left([{\bf A}]_{\operatorname{ BMO}}^{2}(\sigma^{i}R)\right)^{\frac{1}{2}-\frac{1}{q}}\left|\fint_{B_{\sigma^{i}R}(x_{0})}D{\bf
u}\operatorname{d}\!x\right|.
\end{eqnarray}

Next, we turn our attention to the estimate of the last term on the right side of \eqref{th1-inequ5}. A combination of \eqref{th1-inequ5} and \eqref{eqn-minimal2} yields that
\begin{eqnarray}\label{th1-inequ6}
 && \left|\fint_{B_{\sigma^{k+1}R}(x_{0})}D{\bf
u}\operatorname{d}\!x\right| \nonumber\\
  &=& \left|\sum_{i=0}^{k}\left(\fint_{B_{\sigma^{i+1}R}(x_{0})}D{\bf
u}\operatorname{d}\!x-\fint_{B_{\sigma^{i}R}(x_{0})}D{\bf
u}\operatorname{d}\!x\right)+\fint_{B_{R}(x_{0})}D{\bf
u}\operatorname{d}\!x\right|,\nonumber \\
  &\leq& \sigma^{-n}\sum_{i=0}^{k}\fint_{B_{\sigma^{i}R}(x_{0})}\left|D{\bf
u}-\overline{D{\bf u}}_{B_{\sigma^{i}R}(x_{0})}\right|\operatorname{d}\!x+\left|\fint_{B_{R}(x_{0})}D{\bf
u}\operatorname{d}\!x\right|,\nonumber\\
   &\leq& \frac{2}{\sigma^{n}}\fint_{B_{R}(x_{0})}\left|D{\bf
u}-\overline{D {\bf
u}}_{B_{R}(x_{0})}\right|\operatorname{d}\!x + C\int_{0}^{2R}\left( \fint_{B_{\varrho}(x_{0})}\left(\frac{|{\bf F}-\overline{{\bf F}}_{B_{\varrho}(x_{0})}|}{\varrho}\right)^{2}\operatorname{d}\!x\right)^{\frac{1}{2}}\operatorname{d}\!\varrho\nonumber\\
&&+C\sum_{i=0}^{k-1}\left([{\bf A}]_{\operatorname{ BMO}}^{2}(\sigma^{i}R)\right)^{\frac{1}{2}-\frac{1}{q}}\left|\fint_{B_{\sigma^{i}R}(x_{0})}D{\bf
u}\operatorname{d}\!x\right|+\left|\fint_{B_{R}(x_{0})}D{\bf
u}\operatorname{d}\!x\right|,\nonumber\\
&\leq& C\fint_{B_{R}(x_{0})}\left|D{\bf
u}\right|\operatorname{d}\!x + C\int_{0}^{2R}\left( \fint_{B_{\varrho}(x_{0})}\left(\frac{|{\bf F}-\overline{{\bf F}}_{B_{\varrho}(x_{0})}|}{\varrho}\right)^{2}\operatorname{d}\!x\right)^{\frac{1}{2}}\operatorname{d}\!\varrho\nonumber\\
&&+C\sum_{i=0}^{k-1}\left([{\bf A}]_{\operatorname{ BMO}}^{2}(\sigma^{i}R)\right)^{\frac{1}{2}-\frac{1}{q}}\left|\fint_{B_{\sigma^{i}R}(x_{0})}D{\bf
u}\operatorname{d}\!x\right|.
\end{eqnarray}
Setting
\begin{equation*}
  M:=\fint_{B_{R}(x_{0})}\left|D{\bf
u}\right|\operatorname{d}\!x + \int_{0}^{2R}\left( \fint_{B_{\varrho}(x_{0})}\left(\frac{|{\bf F}-\overline{{\bf F}}_{B_{\varrho}(x_{0})}|}{\varrho}\right)^{2}\operatorname{d}\!x\right)^{\frac{1}{2}}\operatorname{d}\!\varrho\,,
\end{equation*}
we claim that
\begin{equation}\label{th1-inequ7}
  \left|\fint_{B_{\sigma^{k+1}R}(x_{0})}D{\bf
u}\operatorname{d}\!x\right|\leq C(n,\lambda,\Lambda)M
\end{equation}
for any $k\in\mathbb{N}$. The proof of this claim is based on induction. Using \eqref{eqn-minimal2}, we have the following estimate for the case of $k=0$.
\begin{eqnarray*}
  \left|\fint_{B_{\sigma R}(x_{0})}D{\bf
u}\operatorname{d}\!x\right| &=& \left|\fint_{B_{\sigma R}(x_{0})}D{\bf
u}\operatorname{d}\!x-\fint_{B_{ R}(x_{0})}D{\bf
u}\operatorname{d}\!x+\fint_{B_{R}(x_{0})}D{\bf
u}\operatorname{d}\!x\right|, \\
 &\leq& \frac{\left|B_{R}\right|}{\left|B_{\sigma R}\right|}\fint_{B_{R}(x_{0})}\left|D{\bf
u}-\overline{D{\bf
u}}_{B_{R}(x_{0})}\right|\operatorname{d}\!x+\fint_{B_{R}(x_{0})}\left|D{\bf
u}\right|\operatorname{d}\!x, \\
   &\leq& C(n,\lambda,\Lambda) \fint_{B_{R}(x_{0})}\left|D{\bf
u}\right|\operatorname{d}\!x\leq CM.
\end{eqnarray*}
Let us suppose that \eqref{th1-inequ7} is true for all $k\leq k_{0}$, and we need to verify the case of $k=k_{0}+1$.
In terms of \eqref{th1-inequ6}, we get
\begin{eqnarray*}
  \left|\fint_{B_{\sigma^{k_{0}+2}R}(x_{0})}D{\bf
u}\operatorname{d}\!x\right|&\leq& CM+ C\sum_{i=0}^{k_{0}}\left([{\bf A}]_{\operatorname{ BMO}}^{2}(\sigma^{i}R)\right)^{\frac{1}{2}-\frac{1}{q}}\left|\fint_{B_{\sigma^{i}R}(x_{0})}D{\bf
u}\operatorname{d}\!x\right|,\\
   &\leq& CM+CM\sum_{i=0}^{k_{0}}\left([{\bf A}]_{\operatorname{ BMO}}^{2}(\sigma^{i}R)\right)^{\frac{1}{2}-\frac{1}{q}}.
\end{eqnarray*}
Applying the fact that $[{\bf A}]_{\operatorname{BMO}}^{2}(\cdot)$ is non-deceasing, $\sigma\in\left(0,\frac{1}{2}\right]$ and the definition of $d(\cdot)$ in \eqref{dini-bmo}, we obtain
\begin{eqnarray*}
&&\sum_{i=0}^{k_{0}}\left([{\bf A}]_{\operatorname{BMO}}^{2}(\sigma^{i}R)\right)^{\frac{1}{2}-\frac{1}{q}} \nonumber\\
&\leq& \sum_{i=0}^{\infty}\left([{\bf A}]_{\operatorname{ BMO}}^{2}(\sigma^{i}R)\right)^{\frac{1}{2}-\frac{1}{q}},\nonumber \\
   &=& \frac{1}{\ln 2}\int_{R}^{2R}\left([{\bf A}]_{\operatorname{ BMO}}^{2}(R)\right)^{\frac{1}{2}-\frac{1}{q}}\frac{\operatorname{d}\!\varrho}{\varrho}+\frac{1}{\ln \frac{1}{\sigma}}\sum_{i=1}^{\infty}\int_{\sigma^{i}R}^{\sigma^{i-1}R}\left([{\bf A}]_{\operatorname{ BMO}}^{2}(\sigma^{i}R)\right)^{\frac{1}{2}-\frac{1}{q}}\frac{\operatorname{d}\!\varrho}{\varrho},\nonumber \\
   &\leq& \frac{1}{\ln 2}\int_{R}^{2R}\left([{\bf A}]_{\operatorname{ BMO}}^{2}(\varrho)\right)^{\frac{1}{2}-\frac{1}{q}}\frac{\operatorname{d}\!\varrho}{\varrho}+\frac{1}{\ln \frac{1}{\sigma}}\sum_{i=1}^{\infty}\int_{\sigma^{i}R}^{\sigma^{i-1}R}\left([{\bf A}]_{\operatorname{ BMO}}^{2}(\varrho)\right)^{\frac{1}{2}-\frac{1}{q}}\frac{\operatorname{d}\!\varrho}{\varrho},\nonumber \\
   &\leq& \left(\frac{1}{\ln 2}+\frac{1}{\ln \frac{1}{\sigma}}\right)\int_{0}^{2R}\left([{\bf A}]_{\operatorname{ BMO}}^{2}(\varrho)\right)^{\frac{1}{2}-\frac{1}{q}}\frac{\operatorname{d}\!\varrho}{\varrho},\nonumber\\
   &\leq&\frac{2}{\ln 2}d(2R).
\end{eqnarray*}
We now further restrict the value of $R_{0}$ such that
\begin{equation*}
  \frac{2}{\ln 2}d(2R_{0})\leq1.
\end{equation*}
By virtue of the fact that $d(\cdot)$ is non-deceasing, we have
\begin{equation}\label{th1-inequ8}
  \frac{2}{\ln 2}d(2R)\leq1
\end{equation}
for any $R\leq R_{0}$. Then the above estimates infer that
\begin{equation*}
  \left|\fint_{B_{\sigma^{k_{0}+2}R}(x_{0})}D{\bf
u}\operatorname{d}\!x\right|\leq CM.
\end{equation*}
Hence, the claim \eqref{th1-inequ7} holds whenever $k\in\mathbb{N}$.
Passing to the limit as $k\rightarrow\infty$ in \eqref{th1-inequ5} and utilizing \eqref{th1-inequ7}, \eqref{th1-inequ8}, \eqref{eqn-minimal2} and the definition of $M$, we deduce
\begin{eqnarray}\label{th1-inequ9}
  && \sum_{i=1}^{\infty}\left(\fint_{B_{\sigma^{i}\!R}(x_{0})}\left|D{\bf
u}-\overline{D{\bf
u}}_{B_{\sigma^{i}\!R}(x_{0})}\right|+\left|{\pi}-\overline{\pi}_{B_{\sigma^{i}\!R}(x_{0})}\right|\operatorname{d}\!x\right)\nonumber\\
&\leq &\fint_{B_{R}(x_{0})}\left|D{\bf
u}-\overline{D {\bf
u}}_{B_{R}(x_{0})}\right|\operatorname{d}\!x + C\int_{0}^{2R}\left( \fint_{B_{\varrho}(x_{0})}\left(\frac{|{\bf F}-\overline{{\bf F}}_{B_{\varrho}(x_{0})}|}{\varrho}\right)^{2}\operatorname{d}\!x\right)^{\frac{1}{2}}\operatorname{d}\!\varrho+CM,\nonumber\\
&\leq&C\fint_{B_{R}(x_{0})}\left|D{\bf
u}\right|\operatorname{d}\!x + C\int_{0}^{2R}\left( \fint_{B_{\varrho}(x_{0})}\left(\frac{|{\bf F}-\overline{{\bf F}}_{B_{\varrho}(x_{0})}|}{\varrho}\right)^{2}\operatorname{d}\!x\right)^{\frac{1}{2}}\operatorname{d}\!\varrho\,.
\end{eqnarray}

In the sequel, let $k\rightarrow\infty$ in \eqref{th1-inequ1} and \eqref{th1-inequ2}, meanwhile applying the Lebesgue differentiation Theorem and \eqref{th1-inequ9}, we conclude
\begin{eqnarray*}
   &&  \left|D{\bf u}(x_{0})-\fint_{B_{R}(x_{0})}D{\bf u}\operatorname{d}\!x \right|+ \left|\pi(x_{0})-\fint_{B_{R}(x_{0})}\pi\operatorname{d}\!x \right| \\
   &\leq& \sigma^{-n}\sum_{i=0}^{\infty}\left(\fint_{B_{\sigma^{i}\!R}(x_{0})}\left|D{\bf
u}-\overline{D{\bf
u}}_{B_{\sigma^{i}\!R}(x_{0})}\right|+\left|{\pi}-\overline{\pi}_{B_{\sigma^{i}\!R}(x_{0})}\right|\operatorname{d}\!x\right) \\
  &\leq& C\fint_{B_{R}(x_{0})}\left|D{\bf
u}\right|\operatorname{d}\!x +C\fint_{B_{R}(x_{0})}\left|\pi\right|\operatorname{d}\!x+ C\int_{0}^{2R}\left( \fint_{B_{\varrho}(x_{0})}\left(\frac{|{\bf F}-\overline{{\bf F}}_{B_{\varrho}(x_{0})}|}{\varrho}\right)^{2}\operatorname{d}\!x\right)^{\frac{1}{2}}\operatorname{d}\!\varrho
\end{eqnarray*}
for almost every $x_{0}\in\Omega$. As a consequence of the above inequalities, we derive
the potential gradient estimate
\begin{equation*}
  |D{\bf u}(x_{0})|+|\pi (x_{0})|\leq C\fint_{B_{R}(x_{0})}\left|D{\bf u}\right|\operatorname{d}\!x+ C\fint_{B_{R}(x_{0})}\left|\pi\right|\operatorname{d}\!x+C\int_{0}^{2R}\left( \fint_{B_{\varrho}(x_{0})}\left(\frac{|{\bf F}-\overline{{\bf F}}_{B_{\varrho}(x_{0})}|}{\varrho}\right)^{2}\operatorname{d}\!x\right)^{\frac{1}{2}}\operatorname{d}\!\varrho
\end{equation*}
holds for almost every $x_{0}\in\Omega$ and every $B_{2R}(x_{0})\subset\Omega$ with $R\leq R_{0}$,
where the positive constant $C=C(n,\lambda,\Lambda)$ and the positive radius $R_{0}=R_{0}\left(n,\lambda,\Lambda,d(\cdot)\right)$.
This completes the proof of Theorem \ref{Th1}\,.
\end{proof}

Subsequently, we will be devoted to the proof of Theorem \ref{Th2} with respect to the zero order pointwise estimate. Note that we only need to impose the uniformly elliptic and bounded assumptions on the vector field $\bf A$ from now on.

\begin{proof}[Proof of Theorem \ref{Th2}] By virtue of \cite{GMo}, we have the following Caccioppoli inequality of the weak solution $\bf v$ to \eqref{ComparisonSystem} that
\begin{equation}\label{th2-inequ1}
  \fint_{B_{R}(x_{0})}|D{\bf v}|^{2}\operatorname{d}\!x\leq\frac{C(n,\lambda,\Lambda)}{R^{2}}\fint_{B_{2R}(x_{0})}|{\bf v}-\overline{\bf v}_{B_{2R}(x_{0})}|^{2}\operatorname{d}\!x\,.
\end{equation}
Thus, a combination of the Sobolev embedding Theorem and \eqref{th2-inequ1} yields that
\begin{equation*}
  \left(\fint_{B_{R}(x_{0})}\left|{\bf
v}-\overline{{\bf
v}}_{B_{R}(x_{0})}\right|^{s}\operatorname{d}\!x\right)^{\frac{1}{s}}\leq C(n,\lambda,\Lambda)\left(\fint_{B_{2R}(x_{0})}\left|{\bf
v}-\overline{{\bf
v}}_{B_{2R}(x_{0})}\right|^{2}\operatorname{d}\!x\right)^{\frac{1}{2}},
\end{equation*}
where
\begin{equation*}
s=
  \left\{\begin{array}{r@{\ \ }c@{\ \ }ll}
\frac{2n}{n-2},  \ & n&>2, \\
p,  \ & n&=2,
\end{array}\right.
\end{equation*}
for any $p\in[2,+\infty)$. Then it follows from Lemma \ref{reverse-holder} and H\"{o}lder's inequality that
\begin{equation}\label{th2-inequ2}
  \left(\fint_{B_{R}(x_{0})}\left|{\bf
v}-\overline{{\bf
v}}_{B_{R}(x_{0})}\right|^{2}\operatorname{d}\!x\right)^{\frac{1}{2}}\leq C(n,\lambda,\Lambda)\fint_{B_{2R}(x_{0})}\left|{\bf
v}-\overline{{\bf
v}}_{B_{2R}(x_{0})}\right|\operatorname{d}\!x.
\end{equation}
On the other hand, utilizing \eqref{th2-inequ1} and \cite[Theorem 7.7]{Giu}, we obtain the decay estimate for $\bf v$ as follows
\begin{equation}\label{th2-inequ3}
  \fint_{B_{\sigma R}(x_{0})}\left|{\bf
v}-\overline{{\bf
v}}_{B_{\sigma R}(x_{0})}\right|^{2}\operatorname{d}\!x\leq C(n,\lambda,\Lambda)\sigma^{2\gamma}\fint_{B_{R}(x_{0})}\left|{\bf
v}-\overline{{\bf
v}}_{B_{R}(x_{0})}\right|^{2}\operatorname{d}\!x
\end{equation}
for any $\sigma\in (0,1]$ and some $\gamma=\gamma(n,\lambda,\Lambda)>0$.

Next, we aim to derive a Campanato-type decay estimate for the weak solution $\bf u$ to \eqref{model}. Applying H\"{o}lder's inequality, \eqref{eqn-minimal2}, \eqref{u-v}, \eqref{th2-inequ3} and \eqref{th2-inequ2}, we deduce
\begin{eqnarray*}
  && \left(\fint_{B_{\sigma\!R}(x_{0})}\left|{\bf
u}-\overline{{\bf
u}}_{B_{\sigma\!R}(x_{0})}\right|\operatorname{d}\!x\right)^{2}\\
   &\leq& \fint_{B_{\sigma\!R}(x_{0})}\left|{\bf
u}-\overline{{\bf
v}}_{B_{\sigma\!R}(x_{0})}\right|^{2}\operatorname{d}\!x\,, \\
  &\leq& C(n)\sigma^{-n}\fint_{B_{2R}(x_{0})}\left|{\bf
u}-{\bf
v}\right|^{2}\operatorname{d}\!x+2\fint_{B_{\sigma\!R}(x_{0})}\left|{\bf
v}-\overline{{\bf
v}}_{B_{\sigma\!R}(x_{0})}\right|^{2}\operatorname{d}\!x, \\
   &\leq&  C\sigma^{-n}R^{2}\fint_{B_{2R}(x_{0})}\!\left|{\bf F}-\overline{{\bf F}}_{B_{2R}(x_{0})}\right|^{2}\operatorname{d}\!x+ C\sigma^{2\gamma}\fint_{B_{R}(x_{0})}\left|{\bf
v}-\overline{{\bf
v}}_{B_{R}(x_{0})}\right|^{2}\operatorname{d}\!x, \\
  &\leq&  C\sigma^{-n}R^{2}\fint_{B_{2R}(x_{0})}\!\left|{\bf F}\right|^{2}\operatorname{d}\!x+ C\sigma^{2\gamma}\left(\fint_{B_{2R}(x_{0})}\left|{\bf
v}-\overline{{\bf
v}}_{B_{2R}(x_{0})}\right|\operatorname{d}\!x\right)^{2},\\
&\leq&  C\sigma^{-n}R^{2}\fint_{B_{2R}(x_{0})}\!\left|{\bf F}\right|^{2}\operatorname{d}\!x+ C\sigma^{2\gamma}\left(\fint_{B_{2R}(x_{0})}\left|{\bf
u}-{\bf
v}\right|+\left|{\bf
u}-\overline{{\bf
u}}_{B_{2R}(x_{0})}\right|\operatorname{d}\!x\right)^{2},\\
&\leq& C\sigma^{2\gamma}\left(\fint_{B_{2R}(x_{0})}\left|{\bf
u}-\overline{{\bf
u}}_{B_{2R}(x_{0})}\right|\operatorname{d}\!x\right)^{2}+ C_{\sigma}R^{2}\fint_{B_{2R}(x_{0})}\!\left|{\bf F}\right|^{2}\operatorname{d}\!x,
\end{eqnarray*}
where $C=C(n,\lambda,\Lambda)$ and $C_{\sigma}=C(n,\lambda,\Lambda,\sigma)$. Hence, we get the following Campanato-type decay estimate with respect to $\bf u$
\begin{equation*}
  \fint_{B_{\sigma\!R}(x_{0})}\left|{\bf
u}-\overline{{\bf
u}}_{B_{\sigma\!R}(x_{0})}\right|\operatorname{d}\!x\leq C\sigma^{\gamma}\fint_{B_{2R}(x_{0})}\left|{\bf
u}-\overline{{\bf
u}}_{B_{2R}(x_{0})}\right|\operatorname{d}\!x+ C_{\sigma}R\left(\fint_{B_{2R}(x_{0})}\!\left|{\bf F}\right|^{2}\operatorname{d}\!x\right)^{\frac{1}{2}}.
\end{equation*}
The subsequent proof goes exactly as that of Theorem \ref{Th1}\,, we only need to reestimate the integral term involving $\bf F$ as follows
\begin{eqnarray*}
  && \sum_{i=0}^{k-1}\sigma^{i}R\left(\fint_{B_{\sigma^{i}R}(x_{0})}\left|{\bf
F}\right|^{2}\operatorname{d}\!x\right)^{\frac{1}{2}}\nonumber \\
   &\leq&  R\left(\fint_{B_{R}(x_{0})}\left|{\bf
F}\right|^{2}\operatorname{d}\!x\right)^{\frac{1}{2}}+ \sum_{i=1}^{\infty}\sigma^{i}R\left(\fint_{B_{\sigma^{i}R}(x_{0})}\left|{\bf
F}\right|^{2}\operatorname{d}\!x\right)^{\frac{1}{2}}, \nonumber\\
   &=&\frac{1}{\ln2}\int_{R}^{2R}R\left(\fint_{B_{R}(x_{0})}\left|{\bf
F}\right|^{2}\operatorname{d}\!x\right)^{\frac{1}{2}}\frac{\operatorname{d}\!\varrho}{\varrho}+\frac{1}{\ln\frac{1}{\sigma}}
\sum_{i=1}^{\infty}\int_{\sigma^{i}R}^{\sigma^{i-1}R}\sigma^{i}R\left(\fint_{B_{\sigma^{i}R}(x_{0})}\left|{\bf
F}\right|^{2}\operatorname{d}\!x\right)^{\frac{1}{2}} \frac{\operatorname{d}\!\varrho}{\varrho},\nonumber\\
  &\leq& \frac{2^{\frac{n}{2}}}{\ln2}\int_{R}^{2R}\varrho\left(\fint_{B_{\varrho}(x_{0})}\left|{\bf
F}\right|^{2}\operatorname{d}\!x\right)^{\frac{1}{2}}\frac{\operatorname{d}\!\varrho}{\varrho}+\frac{1}{\sigma^{\frac{n}{2}}\ln\frac{1}{\sigma}}
\sum_{i=1}^{\infty}\int_{\sigma^{i}R}^{\sigma^{i-1}R}\varrho\left(\fint_{B_{\varrho}(x_{0})}\left|{\bf
F}\right|^{2}\operatorname{d}\!x\right)^{\frac{1}{2}} \frac{\operatorname{d}\!\varrho}{\varrho},\nonumber\\
&\leq&C(n,\lambda,\Lambda)\int_{0}^{2R}\left( \fint_{B_{\varrho}(x_{0})}\left|{\bf F}\right|
^{2}\operatorname{d}\!x\right)^{\frac{1}{2}}\operatorname{d}\!\varrho\nonumber\\
&=&C\,{\bf W}_{\frac{2}{3},3}^{2R}\left(\left|{\bf F}\right|
^{2}\right)(x_{0})\,.
\end{eqnarray*}
Therefore, we can deduce the desired zero order pointwise estimate \eqref{zero-estimate}, which completes the proof of Theorem \ref{Th2}\,.
\end{proof}

\section*{Acknowledgments}The authors are supported by the National Natural Science Foundation of China (NNSF  Grant No.11671414 and No.11771218).

\bibliography{bibliography}

\end{document}